\def\rr{{\mathbb R}}
\def\rn{{{\rr}^n}}
\def\fz{\infty}
\def\dist{{\mathop\mathrm{\,dist\,}}}
\def\loc{{\mathop\mathrm{\,loc\,}}}
\def\lip{{\mathop\mathrm{\,Lip}}}
\def\dz{\delta}
\def\bdz{\Delta}
\def\ez{\varepsilon}
\def\bz{\beta}
\def\gz{{\gamma}}
\def\sz{\sigma}
\def\wz{\widetilde}
\def\Xint#1{\mathchoice	{\XXint\displaystyle\textstyle{#1}}	{\XXint\textstyle\scriptstyle{#1}}	{\XXint\scriptstyle\scriptscriptstyle{#1}}	{\XXint\scriptscriptstyle\scriptscriptstyle{#1}}	\!\int}\def\XXint#1#2#3{{\setbox0=\hbox{$#1{#2#3}{\int}$}	\vcenter{\hbox{$#2#3$}}\kern-.5\wd0}}\def\dashint{\Xint-}
\newtheorem{thm}{Theorem}[section]
\newtheorem{lem}{Lemma}[section]
\newtheorem{rem}{Remark}[section]
\newtheorem{defn}{Definition}[section]
\numberwithin{equation}{section}
\begin{document}

\arraycolsep=1pt

\title[A quantitative second order Sobolev regularity]{A quantitative second order Sobolev regularity\\ for (inhmogeneous) normalized $p(\cdot)$-Laplace equations 
}
{ \footnotetext{\hspace{-0.35cm}
\endgraf
 2020 {\it Mathematics Subject Classification:} 35J25; 35J60; 35B65.
 \endgraf {\it Key words and phrases:}  
 normalized $p(x)$-Laplacian,  strong $p(x)$-Laplacian,
 second order regularity, quasiregular mapping
 \endgraf This project was  supported by  NSFC (No.   12025102) and by the Fundamental Research Funds for the Central Universities.
}

\author{Yuqing Wang and Yuan Zhou}


\date{\today}

\maketitle

\begin{center}
\begin{minipage}{13.5cm}\small{\noindent{\bf Abstract}\quad
Let $\Omega$ be a domain of $\mathbb R^n$ with $n\ge 2$ and $p(\cdot)$  be  a local Lipschitz funcion in $\Omega$ with   $1<p(x)<\infty$ in $\Omega$. 
We build up an interior {\color{red}quantitative} second order Sobolev regularity for  
   the normalized $p(\cdot)$-Laplace equation $-\Delta^N_{p(\cdot)}u=0$  in $\Omega$  as well    as  the corresponding inhomogeneous equation {\color{red}$-\Delta^N_{p(\cdot)}u=f$} in $\Omega$  with $f\in C^0(\Omega)$. 
  In particular,   given 
  any   viscosity solution $u$   to $-\Delta^N_{p(\cdot)}u=0$  in $\Omega$,      we prove the following: 
  \begin{enumerate}
\item[(i)] in dimension $n=2$, for any subdomain $U\Subset\Omega$ and any $\bz\ge 0$,  one has 
$|Du|^\bz Du\in L^{2+\dz}_\loc(U)$ with a {\color{red}quantitative} upper bound, and moreover,    the map $(x_1,x_2)\to |Du|^\bz(u_{x_1},-u_{x_2})$ is quasiregular  in $U$ in the sense     that  
  $$|D[|Du|^\bz Du]|^2\le -C\det D[|Du|^\bz Du] \quad \mbox{a.\,e. in $U$}.$$  
  
  \item[(ii)] in dimension $n\ge3$,  for any subdomain $U\Subset\Omega$ with 
   $ \inf_U p(x)>1$ and $\sup_Up(x)<3+\frac2{n-2}$,  one has 
$D^2u\in L^{2+\dz}_\loc(U)$  with a {\color{red}quantitative} upper bound,   and also with a pointwise upper bound 
$$|D^2u|^2\le -C\sum_{1\le i<j\le n}[u_{x_ix_j}u_{x_jx_i}-u_{x_ix_i}u_{x_jx_j}] \quad \mbox{a.\,e. in $U$}.$$
   \end{enumerate}
    Here constants $\dz>0$ and $C\ge 1$  are independent of $u$. 
  These extend the  related results obtaind by  Adamowicz-H\"ast\"o \cite{AH2010} when $n=2$ and $\bz=0$. } 
\end{minipage}
\end{center}

\section{Introduction}

Let $\Omega$ be a  domain (open connected set) of $\rn$ with $n\ge2$. Suppose that 
   $p:\Omega\to\rr$ is a continuous function   with $1<p(x)<\fz$ for all $x\in \Omega$; for short,  $p\in C^0(\Omega;(1,\fz))$.   
In general,  the function $p$ is not necessary to be a constant; to  emphasis  this, 
we   write  $p(\cdot)$  or  $p(x)$.  Denote by 
 $-\Delta^N_{p(\cdot)}$    the  normalized $p(\cdot)$-Laplacian, that is,   
 $$\Delta^N_{p(\cdot)}u:=\Delta u+(p(\cdot)-2)\frac{\Delta_\fz u}{|Du|^2},$$ 
 where $\Delta_\fz$ is the infinity Laplacian given by
 $\Delta_\fz u:=D^2uDu\cdot Du.$  
 Observe  that $\Delta^N_{p(\cdot)}u$ is not of divergence form.
  Consider the normalized $p(\cdot)$-Laplace equation
\begin{equation}\label{eq1}
-\Delta^N_{p(\cdot)}u =0\quad \mbox{ in $ \Omega$,}
\end{equation}
 and also, for any $f \in C^0(\Omega)$, 
 the corresponding inhomogeneous   equation  
\begin{equation}\label{eq1f}-\Delta^N_{p(\cdot)}u=f\quad  \mbox{ in $ \Omega$.}\end{equation}
   Viscosity solutions  to \eqref{eq1} and \eqref{eq1f}
  are given in  
  Definition \ref{def2-1}.

 This paper aims to build up  a {\color{red}quantitative} second order Sobolev regularity 
 for viscosity solutions to   the equations \eqref{eq1} and   \eqref{eq1f} under an additional assumption 
 $p(\cdot)\in C^{0,1}(\Omega)$, that is, $p(\cdot)$ is  local Lipschitz  in $\Omega$ 
in the sense that 
\begin{equation}\label{loclip}\lip(p(\cdot),U)=\sup_{x,y\in U}\frac{|p(x)-p(y)|}{|x-y|}<\fz \ \mbox{for any subdomain $U\Subset\Omega$. }\end{equation}
Throughout the whole paper, $U\Subset\Omega$ means that $U$ is a bounded  domain and its closure $\overline U\subset\Omega$. 
For convenience, we  write  $$
\mbox{$p(\cdot)\in C^{0,1}(\Omega;(1,\fz))$ whenever 
$p(\cdot)\in C^{0,1}(\Omega)$ and $1<p(x)<\fz$ for all  $x\in \Omega$. }$$
We also use $p_\pm^E$ to denote the supremum/infimum  of $p(\cdot)$ in the set $E \subset\Omega$, 
that is,
$$ p^E_+=\sup\{p(x)|x\in E\}  \ \mbox{and}\ p^E_-=\min\{p(x)|x\in E\}.$$
Obviously,  $1<p^U_-\le p^U_+<\fz$ for any domain $U\Subset\Omega$. 
Moreover, we use $C(a,b,\cdots)$ to denote a positive constant which depending only on the parameters $a,b,\cdots$, but whose value may   vary from line to line.   
 
 Regards to  the normalized $p(\cdot)$-Laplace equation    \eqref{eq1}, our main results are stated  in Theorem  1.1 when
  the dimension $n=2$ and in Theorem  1.2 when the dimension $n\ge3$.

 \begin{thm} Let $\Omega\subset\rr^2$ be a   domain, $p(\cdot)\in C^{0,1}(\Omega;(1,\fz))$ and   $\bz\in[0,\fz)$. 
For any   viscosity solution $u$   to the equation \eqref{eq1}, the following hold.  
 \begin{enumerate}
 \item[(i)]  One has  $  |Du | ^{\beta }Du \in W^{1,2}_\loc(\Omega)$, and in particular, $D^2u\in L^2_\loc(\Omega)$.  Moreover, 
 the map $(x_1,x_2)\to |Du|^\bz ( u_{x_1},-  u_{x_2} )$  is locally quasiregular in $\Omega$, that is, 
 \begin{align}\label{bound3} \mbox{ for any domain $U\Subset\Omega$, one has  }  
|D[|Du|^{ \bz}Du] |^2\le - C(p^U_\pm,\beta)\det D[|Du|^{ \bz}Du] \quad {\rm a.e.\  in} \    U.
\end{align}
 
 \item[(ii)]  Given any    subdomain $U\Subset\Omega$,   
  one has 
 $|Du|^{ \bz}Du\in W_\loc^{1,2+\dz}(U)$ for some $\dz=\dz( p^U_\pm,\bz)>0$ with a {\color{red}quantitative} upper bound 
\begin{align}\label{bound2}
 \left(\dashint_{\frac 14B}|D(|Du|^\beta Du)|^{2+\dz}\,dx\right)^{\frac 1{ 2+\dz }} 
 &\le C(p^U_\pm, \beta )  \frac1{R } \left(\dashint_{ {\frac34} B }   ||Du |^{\beta } Du -\vec c |^2 \,dx\right)^{1/2}  \quad \forall \  B=B(z,R)\Subset U.
\end{align}
%

   \end{enumerate}
  \end{thm}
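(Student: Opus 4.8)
The plan is to reduce everything to a Cordes–Nirenberg/Caccioppoli-type argument for the auxiliary vector field $V:=|Du|^\beta Du$, together with Bojarski's self-improving integrability for quasiregular-type mappings in the plane. The starting point must be a qualitative $W^{2,2}_{\rm loc}$ bound: since $u$ is only a viscosity solution, I would first regularize, either by mollifying $p(\cdot)$ and adding ellipticity to pass to smooth solutions $u_\varepsilon$ of uniformly elliptic quasilinear equations (a standard device for normalized $p$-Laplacians, cf. the vanishing-viscosity approach), or by invoking the known $C^{1,\alpha}_{\rm loc}$ regularity of viscosity solutions to \eqref{eq1} so that $Du$ is continuous and one may work on the open set $\{Du\neq 0\}$ where the equation is uniformly elliptic and classical Schauder/Calderón–Zygmund theory applies. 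On $\{Du\neq 0\}$ one writes \eqref{eq1} in nondivergence form $\Delta u+(p(\cdot)-2)\,\Delta_\infty u/|Du|^2=0$; differentiating and testing against suitable cutoffs yields the Caccioppoli inequality
\begin{equation*}
\int_{\frac12 B}|D^2u|^2\,\eta^2\,dx\le C(p^U_\pm,\beta)\Big(\frac1{R^2}\int_{B}|Du-\vec a|^2\,dx+\big({\rm lip}(p(\cdot),B)\big)^2\int_B|Du|^2\log^2|Du|\,\eta^2\,dx\Big),
\end{equation*}
where the logarithmic term comes from differentiating $p(\cdot)$; the $C^{1,\alpha}$ bound on $Du$ absorbs this lower-order term. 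This gives $V=|Du|^\beta Du\in W^{1,2}_{\rm loc}(\Omega)$ and hence $D^2u\in L^2_{\rm loc}$, proving the first sentence of (i).

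For the quasiregularity inequality \eqref{bound3}: in $n=2$ set $f:=|Du|^\beta(u_{x_1},-u_{x_2})=V^\perp$ componentwise, so $Df$ and $DV$ have the same Jacobian up to sign and $|Df|=|DV|$. The key computation is to expand $|DV|^2$ and $\det DV$ pointwise in terms of $D^2u$ and $Du$ using $V_i=|Du|^\beta u_{x_i}$, then substitute the PDE relation $u_{x_1x_1}+u_{x_2x_2}=-(p-2)\Delta_\infty u/|Du|^2$ to eliminate the trace of $D^2u$. After this substitution one is left, at each point of $\{Du\neq 0\}$, with an algebraic inequality of the form $|DV|^2\le -C(p,\beta)\det DV$; this is exactly the place where the planar structure and the bounds $1<p^U_-\le p^U_+<\infty$ (and $\beta\ge0$) enter to make the quadratic form definite, so $C$ depends only on $p^U_\pm$ and $\beta$. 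On the (closed, measure-theoretically negligible for the derivative, by the $W^{1,2}$ membership and the fact that $D^2u=0$ a.e. on $\{Du=0\}$) set $\{Du=0\}$ both sides vanish, so the inequality holds a.e. in $U$.

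Part (ii) is then the self-improvement step. Having \eqref{bound3}, the map $f=V^\perp$ is a $W^{1,2}_{\rm loc}$ mapping satisfying a distortion inequality $|Df|^2\le -C\det Df$ a.e.; by Bojarski's theorem (higher integrability of quasiregular mappings / Gehring's lemma applied to the reverse Hölder inequality that such maps satisfy), there exists $\delta=\delta(p^U_\pm,\beta)>0$ with $Df\in L^{2+\delta}_{\rm loc}(U)$ and the scale-invariant estimate
\begin{equation*}
\Big(\dashint_{\frac14 B}|Df|^{2+\delta}\,dx\Big)^{1/(2+\delta)}\le C(p^U_\pm,\beta)\Big(\dashint_{\frac12 B}|Df|^2\,dx\Big)^{1/2},\qquad B\Subset U.
\end{equation*}
Combining this with the Caccioppoli inequality above on $\frac12 B$ versus $\frac34 B$ — the $\log$-term again being controlled by the $C^{1,\alpha}$ bound and the local Lipschitz bound on $p(\cdot)$, hence absorbable into the constant $C(p^U_\pm,\beta)$ at the cost of possibly shrinking radii — and rewriting in terms of $V=f^\perp$ (with an arbitrary constant vector $\vec c$, using that the Caccioppoli RHS only sees $Du$ up to an additive constant), yields \eqref{bound2}. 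The main obstacle I anticipate is the first, qualitative step: rigorously justifying the differentiated equation and the Caccioppoli inequality for a mere viscosity solution, i.e. the regularization/approximation argument and the passage to the limit, including showing that the degenerate set $\{Du=0\}$ causes no trouble; once $u\in W^{2,2}_{\rm loc}\cap C^{1,\alpha}_{\rm loc}$ is in hand, the pointwise algebra and the Bojarski/Gehring machinery are routine.
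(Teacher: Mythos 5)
Your high-level skeleton (qualitative $W^{2,2}_{\rm loc}$ via a Caccioppoli estimate, a pointwise algebraic quasiregularity inequality, then Gehring/Bojarski self-improvement) is the right shape, and the Bojarski/Gehring route for part (ii) is genuinely an alternative to the paper's mechanism --- indeed the paper's Remark 1.2 endorses exactly such a quasiregular-theory shortcut. But there are two places where your argument has a real gap, and one of them is fatal to the specific quantitative claim.

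First, and most seriously: your Caccioppoli inequality is obtained by \emph{differentiating} the equation, and this produces terms involving $Dp$. (As an aside, the $\log|Du|$ factor you wrote is misplaced --- it comes from writing the \emph{strong} $p(x)$-Laplacian $\Delta^S_{p(\cdot)}$ in divergence form as in \eqref{eq1.2}; differentiating the normalized equation $\Delta u+(p(\cdot)-2)\Delta_\infty u/|Du|^2=0$ in $x_k$ gives a term $p_{x_k}\,\Delta_\infty u/|Du|^2$, whose size is $|Dp|\,|D^2u|$, no logarithm. Either way, a $Dp$ term appears.) Any constant coming out of such a Caccioppoli will depend on ${\rm lip}(p(\cdot),U)$ and, after absorbing the resulting lower-order piece, on $\|Du\|_{L^\infty}$. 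But the target estimate \eqref{bound2} asserts a constant $C(p^U_\pm,\beta)$ only --- independent of ${\rm lip}(p)$ and of $\|Du\|_{L^\infty}$. You cannot reach that from your Caccioppoli. The paper avoids the issue entirely: it never differentiates the equation. It proves (Lemma \ref{lem3-1}) a \emph{pointwise} inequality of the form $|D[(|Dv|^2+\ez)^{\beta/2}Dv]|^2\le C_\star\,\sigma_2(D[(|Dv|^2+\ez)^{\beta/2}Dv])+\widetilde C_\star(|Dv|^2+\ez)^\beta(g-v)^2$ for solutions of the approximating equation, using only the algebraic identity for $\sigma_2$ (Lemma \ref{lem2.2}), the fundamental inequality for $\Delta v\,\Delta_\infty v$ (Lemma \ref{lem2.1}), and the PDE itself as an algebraic relation for $\Delta v$ --- no $Dp$ ever enters, so $C_\star,\widetilde C_\star$ depend only on $t_\pm,n,\beta$. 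The Caccioppoli then follows by multiplying by $\phi^2$ and exploiting the \emph{divergence structure} of $\sigma_2$ (Lemmas \ref{lem2.3}--\ref{lem2.3}, identity \eqref{id2.4}), which integrates by parts onto $D\phi$ with no coefficient derivatives. That is what produces the clean constant in \eqref{bound2}.

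Second, you flag the qualitative first step as the ``main obstacle'' but leave both of your options unresolved, and both do in fact break down as stated. Working on $\{Du\neq0\}$ with Schauder/Calder\'on--Zygmund gives interior regularity there but no uniform $W^{2,2}$ bound up to the degenerate set $\{Du=0\}$, which is exactly the hard part. The vanishing-viscosity regularization has a subtler problem: uniqueness of viscosity solutions to \eqref{eq1} is open, so without further structure you cannot conclude that the regularized solutions converge to \emph{your given} $u$. The paper's approximation \eqref{neq4-1} is designed around this: it adds the zero-order term $+w$ (solving $-\Delta w-(p^\ez-2)\Delta_\infty w/(|Dw|^2+\ez)=f^\ez+u^{0,\ez}-w$) precisely so that the comparison principle applies to the limiting equation $-\Delta^N_{p(\cdot)}v=f+(u-v)$, forcing $v=u$. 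This also explains why the pointwise inequality \eqref{neq3.1} carries the extra $(g-v)^2$ term even when $f=0$. Your proposal does not account for this, and without it the limiting identification is not justified.

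Finally, the pointwise step (substitute the equation into an expansion of $|DV|^2$ and $\det DV$ to get $|DV|^2\le -C\det DV$) is the right idea and is essentially what Lemma \ref{lem3-1} does, but the ``key computation'' you invoke is precisely the paper's nontrivial content: the choice of the auxiliary parameter $\eta$, the range $\beta>\beta_\star(n,t_+)$, and the sign analysis of the quadratic form ${\bf K}_2$. You should also note that the restriction $\beta\ge0$ (rather than $\beta>-1$) in the theorem is imposed not by the algebra but by the convergence of the term $(|Dv^\ez|^2+\ez)^\beta(u^{0,\ez}-v^\ez)^2$ as $\ez\to0$, a point your sketch does not touch.
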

%
%

To state our main results for the {\color{red}equation} \eqref{eq1} in dimension $n\ge3$,   
we need some extra but necessary notation and notions concerning  $p(\cdot)$ and $n$. 
Firstly, we work with $\sigma_2(D[|Du|^\bz Du])$ as a natural {\color{red}substitute} 
of    the determinant $-\det D[|Du|^{ \bz}Du]$ as considered  in   Theorem 1.1 (i) in dimension $n=2$, {\color{red} where  $\sigma_2(D[|Du|^\bz Du])=-\det D[|Du|^\bz Du]$.
We  refer to \cite{dpzz19} for some reasons to use $\sigma_2$ in higher dimensions.} 
Recall  that, 
 $$\sz_2(M ):={-}\sum_{1\le i<j\le n}[m_{ii}m_{jj}-m_{ij}m_{ji}]\quad 
 \mbox{ for any $n\times n$ matrix $M =(m_{ij})_{1\le i,j\le n}$}.$$
 Obviously, when $n=2$, $\sz_2(M )=-\det M$. 
Next,   for any   $t>1$, set
 \begin{align}\label{betastar}\bz_\star(n,t):=-1+\frac{n-2}{{2}(n-1)}(t-1).  
 \end{align} 
Note that $\bz_\star(2,t)\equiv-1$ in dimension $n=2$. 
In dimension $n\ge3$,  $\bz_\star(n,t)$ is strictly increasing in $t$, 
$$\mbox{$\bz_\star(n,t)>-1$ for all $t>1$,  and $\bz_\star(n,t)=0$   if and only if  }\ t= 3+\frac{2}{n-2}. $$
  Moreover,  write  
 $$\Omega^\ast_n:=\left\{x\in \Omega|p(x)<3+\frac 2{n-2}\right\}.$$
Obviously, $\Omega^\ast_2=\Omega$ in dimension $n=2$.

\begin{thm}Let $\Omega\subset\rn$ be a   domain with $n\ge3$  and $p(\cdot)\in C^{0,1}(\Omega;(1,\fz))$. 
For any  viscosity solution  $u$ to the equation \eqref{eq1}, the following hold.

\begin{enumerate}
\item[(i)]  One has $D^2u\in L^2_\loc(\Omega^\ast_n)$. Given any    subdomain $U\Subset\Omega_n^\ast$, one has 
$D^2u\in L^{2+\dz}_\loc(U)$ for some $ \dz=\dz(p^U_\pm,n)>0$, and moreover, 
a pointwise upper bound  
 \begin{align}\label{eq4.x1}
 |D^2u |^2\le C 
 (p^U_\pm,n )\sigma_2(D^2u) \quad \mbox{a.\,e. in $ U$}. 
\end{align}
\item[(ii)]  

Given any    subdomain $U\Subset\Omega$ and any $\bz\in(\bz_\star(n,p^U_+) ,\fz)\cap[0,\fz)$,
one has  $ |Du|^{ \bz}Du \in {W}^{1,{2+\dz}}_\loc (U)$  for some $ \dz=\dz(p^U_\pm,n,\bz)>0$,   a {\color{red}quantitative} upper bound  \eqref{bound2} with 
 the constant $C( p^U_\pm,\bz) $ therein replaced by $C( p^U_\pm,n,\bz)$,
%
 and moreover, a pointwise upper bound 
 $$
 |D[|Du|^\bz Du]|^2\le C 
 (p^U_\pm,n,\beta)\sigma_2(D[ |Du| ^{{\bz }}Du]) \quad \mbox{a.\,e. in $U$}. $$
 
  
\end{enumerate}
\end{thm}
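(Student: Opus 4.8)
The plan is to derive the second‑order Sobolev regularity and the quantitative bound \eqref{bound2} from an \emph{a priori} estimate for a uniformly elliptic regularization, and then to obtain the pointwise bounds in (i)--(ii) as a purely algebraic consequence of the equation once $W^{2,2}_\loc$‑regularity is available. Since all the assertions are local, I fix a ball $B=B(z,R)\Subset U$ and, for $\ez\in(0,1)$, let $u^\ez\in C^\infty(B)\cap C(\overline B)$ solve
$$\mathrm{tr}\Big[\Big(I+(p(x)-2)\tfrac{Du^\ez\otimes Du^\ez}{|Du^\ez|^2+\ez^2}\Big)D^2u^\ez\Big]=0\ \ \text{in }B,\qquad u^\ez=u\ \ \text{on }\pa B,$$
a quasilinear equation whose ellipticity constants depend only on $p^B_\pm$ (not on $\ez$); by the comparison principle for viscosity solutions (Definition \ref{def2-1}) and the interior $C^{1,\az}$‑regularity one has $u^\ez\to u$ in $C^{1,\az'}_\loc(B)$ with $\{Du^\ez\}$ locally uniformly bounded. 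Writing $V^\ez:=(|Du^\ez|^2+\ez^2)^{\bz/2}Du^\ez$ and $V:=|Du|^\bz Du$, everything will follow from a Caccioppoli‑type estimate
$$\int_B|DV^\ez|^2\eta^2\,dx\ \le\ C(p^B_\pm,n,\bz)\Big[\int_B|V^\ez-\vec c|^2|D\eta|^2\,dx+\int_B\eta^2\,dx\Big]$$
uniform in $\ez$, valid for all $\eta\in C_c^\infty(B)$ and $\vec c\in\rr^n$.

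The algebraic heart is this: where $Du\neq 0$, the equation forces the symmetric matrix $D^2u$ to satisfy the single linear relation $\mathrm{tr}(A_pD^2u)=0$ with $A_p:=I+(p-2)\,\widehat e\otimes\widehat e$, $\widehat e:=Du/|Du|$, while $D[|Du|^\bz Du]=|Du|^\bz B\,D^2u$ with $B:=I+\bz\,\widehat e\otimes\widehat e$. Rotating so that $\widehat e=e_1$ and optimizing over the remaining entries, one checks that for every symmetric $S$ with $\mathrm{tr}(A_pS)=0$,
$$|BS|^2\ \le\ K(n,t,\bz)\,\sz_2(BS),\qquad K(n,t,\bz)<\fz\ \Longleftrightarrow\ \bz>\bz_\star(n,t);$$
for $\bz=0$ the admissibility inequality reduces to $(n-2)t^2-(4n-6)t+(3n-4)<0$, i.e.\ $1<t<3+\tfrac2{n-2}$, so $\bz=0$ is admissible at $x$ precisely when $x\in\Omega^\ast_n$. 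Granting the Caccioppoli estimate, the convergence $V^\ez\to V$ in $C^0_\loc$ identifies the $L^2_\loc$‑weak limit of $DV^\ez$ as $DV$, so $V\in W^{1,2}_\loc(U)$; taking $\bz=0$ and covering $\Omega^\ast_n$ by balls with $p^B_+<3+\tfrac2{n-2}$ gives $D^2u\in L^2_\loc(\Omega^\ast_n)$. As $u$ is then a $W^{2,2}_\loc$ viscosity solution, the a.e.\ twice‑differentiability of $W^{2,2}$‑functions together with the definition of viscosity solution shows $\Delta u+(p(x)-2)\Delta_\fz u/|Du|^2=0$ a.e.\ on $\{Du\neq0\}$ (while $D^2u=0$ a.e.\ on $\{Du=0\}$, where both sides of the claimed bounds vanish), and the displayed inequality applied pointwise with $t=p(x)$, $S=D^2u(x)$ yields the pointwise bounds in (i) and (ii).

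To prove the Caccioppoli estimate I would differentiate the regularized equation in each direction $x_k$, test the equation for $\pa_{x_k}u^\ez$ against $\eta^2$ times the corresponding component of $DV^\ez$, sum in $k$, and integrate by parts; using the equation to eliminate $\Delta u^\ez$ and $\Delta_\fz u^\ez$ from the leading third‑order terms and using the null‑Lagrangian structure $\sz_2(DF)=-\sum_{i<j}\pa_{x_i}(F^iF^j_{x_j})+\sum_{i<j}\pa_{x_j}(F^iF^j_{x_i})$ to integrate by parts the quadratic‑in‑$D^2u^\ez$ contributions, one is led to
$$\int_B\Lambda(p(x),n,\bz)\,|DV^\ez|^2\eta^2\,dx\ \le\ C\int_B|V^\ez-\vec c|\,|DV^\ez|\,|D\eta|\,\eta\,dx\ +\ \mathcal E_\ez,\qquad \Lambda(t,n,\bz)>0\iff\bz>\bz_\star(n,t),$$
with $\Lambda(p(x),n,\bz)\ge\Lambda_0>0$ on $B$ when $\bz>\bz_\star(n,p^B_+)$; a Young inequality then absorbs the first term on the right into the left. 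Here $\mathcal E_\ez$ collects the genuinely lower‑order terms produced by the $x$‑dependence of the coefficient — each carrying the bounded factor $\lip(p(\cdot),U)$, one fewer derivative of $u^\ez$ than the leading terms, and at worst a factor $|Du^\ez|^\gz|\log|Du^\ez||$ with $\gz>0$ that stays bounded (using $\bz\ge0$ and $s^\gz|\log s|\to0$ as $s\to0^+$) — together with the $\ez$‑regularization corrections; controlled via $\lip(p(\cdot),U)$ and the $C^{1,\az}$‑bound, they are absorbed up to the term $\int_B\eta^2\,dx$. I expect the main obstacle to be exactly here: obtaining the Bochner identity in the form that produces the \emph{sharp} constant $\Lambda$ with $\Lambda>0\iff\bz>\bz_\star(n,t)$ — the higher‑dimensional replacement of the unconditional planar estimate of Theorem 1.1 — while keeping the $Dp$‑ and $\log|Du|$‑contributions lower order throughout.

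Finally, with $\vec c=\dashint_{\wz B}V^\ez$, the Caccioppoli estimate and the Sobolev--Poincaré inequality applied to $V^\ez-\vec c$ yield a reverse Hölder inequality $\big(\dashint_{\frac12\wz B}|DV^\ez|^2\big)^{1/2}\le C\big(\dashint_{\wz B}|DV^\ez|^{\frac{2n}{n+2}}\big)^{\frac{n+2}{2n}}+C$ on all small balls $\wz B\Subset B$; Gehring's lemma (in the version allowing an additive lower‑order term) then gives $|DV^\ez|\in L^{2+\dz}_\loc$ with $\dz=\dz(p^B_\pm,n,\bz)>0$ together with the quantitative bound \eqref{bound2}, all uniform in $\ez$. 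Passing to the limit ($V^\ez\to V$ in $C^0_\loc$, $DV^\ez\rightharpoonup DV$ in $L^{2+\dz}_\loc$, lower semicontinuity of the norm) gives $V=|Du|^\bz Du\in W^{1,2+\dz}_\loc(U)$ with \eqref{bound2}; combined with the algebraic inequality of the second paragraph, this proves (i) and (ii).
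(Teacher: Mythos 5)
Your overall scheme (uniformly elliptic regularization $\to$ uniform Caccioppoli $\to$ Gehring $\to$ pass to the limit, plus a pointwise algebraic inequality for the matrix $D^2u$) is in the same spirit as the paper, and your observation that one can derive the a.e.\ pointwise bound directly from a.e.\ twice differentiability of the $W^{2,2}_{\loc}$ solution plus the viscosity definition is a nice alternative to the paper's weak-limit argument for \eqref{eq4.x1}. However, there are two substantive problems.

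\textbf{(1) The approximation scheme relies on a comparison/uniqueness statement that is open.} You regularize by
$\mathrm{tr}\big[(I+(p(x)-2)\frac{Du^\ez\otimes Du^\ez}{|Du^\ez|^2+\ez^2})D^2u^\ez\big]=0$ with $u^\ez=u$ on $\partial B$ and assert ``by the comparison principle for viscosity solutions ... $u^\ez\to u$.'' For fixed $\ez>0$ these problems are fine, and a subsequence of $u^\ez$ does converge locally uniformly to \emph{some} viscosity solution $\wz u$ of $-\Delta^N_{p(\cdot)}\wz u=0$ in $B$ with $\wz u=u$ on $\partial B$. But identifying $\wz u=u$ requires uniqueness of viscosity solutions for the Dirichlet problem for $-\Delta^N_{p(\cdot)}$, and this is precisely what is unknown (the paper states this explicitly in Remark 4.1(ii) and in the discussion preceding Theorem 4.1). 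The paper circumvents this by approximating not \eqref{eq1} but the ``proper'' equation $-\Delta^N_{p(\cdot)}v+v=u$ (see \eqref{neq4-1in}--\eqref{neq4-1}), whose zeroth-order term restores the comparison principle and forces the limit to equal $u$. Without such a modification, your passage to the limit is unjustified, and this is a genuine gap, not a routine technicality. (Remark 4.1(ii) notes your scheme \emph{would} be the right one --- and would even improve the $\bz$-range --- if uniqueness were established; it hasn't been.)

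\textbf{(2) The route to the Caccioppoli estimate is different from the paper's, and as sketched it weakens the quantitative bound.} The paper does not differentiate the equation. Instead it proves the purely pointwise inequality \eqref{neq3.1} by combining the algebraic identity for $\sigma_2$ (Lemma \ref{lem2.2}) with the fundamental inequality for $\Delta v\,\Delta_\fz v$ (Lemma \ref{lem2.1}) and the equation itself, and then obtains the Caccioppoli estimate \eqref{eq4.2} by multiplying by $\phi^2$ and integrating by parts the $\sigma_2$-term via its null-Lagrangian structure (Lemma \ref{lem2.3}). Because the only error term is $(|Dv|^2+\ez)^\bz(g-v)^2$, and $g-v=u^{0,\ez}-v^\ez\to 0$ uniformly, the Caccioppoli estimate in the limit has \emph{no} additive term; this is what yields \eqref{bound2} as stated. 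Your Bochner-type plan (differentiate the equation, test, integrate by parts) introduces a residual ``$+\int\eta^2$'' on the right which survives in the limit and would produce an extra additive constant in \eqref{bound2}, so it does not reproduce the theorem's quantitative upper bound. You correctly anticipate that getting the sharp constant $\Lambda(t,n,\bz)>0 \iff \bz>\bz_\star(n,t)$ is ``the main obstacle,'' but in fact the paper sidesteps the Bochner route entirely; the sharp dichotomy comes out of the elementary but delicate Step 4 of Lemma \ref{lem3-1} (the sign of ${\bf K}_2$), not from any differentiation of the equation.

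A smaller remark: the algebraic inequality you invoke (``for every symmetric $S$ with $\mathrm{tr}(A_pS)=0$ one has $|BS|^2\le K\,\sigma_2(BS)$ iff $\bz>\bz_\star$'') is essentially the $\ez=0$ specialization of \eqref{neq3.1} with $g=v$, so it is true, but it is exactly the content of Lemma \ref{lem3-1} and should be proved rather than asserted; moreover the paper also needs $\bz\ge 0$ because of the convergence \eqref{conf}, a restriction your sketch silently drops and which matters once $p^U_+<3+\frac{2}{n-2}$ so that $\bz_\star(n,p^U_+)<0$.
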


Our main results to the inhomogeneous  normalized $p(\cdot)$-Laplace equation  \eqref{eq1f} are given in Theorem  1.3 when
    $n=2$ and in Theorem  1.4 when $n\ge3$. 
\begin{thm} 
Let $\Omega\subset\rr^2$ be a   domain,  $p(\cdot)\in C^{0,1}(\Omega;(1,\fz))$ and $f\in C^0(\Omega)$.  Let $\bz\in[0,\fz)$. 
For   any  viscosity solution $u$ to the equation \eqref{eq1f}, the following hold. 
\begin{enumerate}
\item[(i)] One has   $D[ |Du | ^{\beta }Du ]\in L^2_\loc(\Omega)$ and in particular, $D^2u\in L^2_\loc(\Omega)$. 
\item[(ii)] Given   any subdomain $U\Subset\Omega$,   one has 
  $|Du|^{ \bz}Du\in W_\loc^{1,2+\dz}(U )$ for some  $\delta=\dz(p^U_\pm,\bz)>0$ with  a  {\color{red}quantitative} upper bound
    \begin{align}\label{bound4}
&\left(\dashint_{\frac 14B}|D(|Du|^\beta Du)|^{2+\dz}\,dx\right)^{\frac 1{ 2+\dz }} \nonumber\\
 &\le C(p^U_\pm, \beta ) \left[\frac1{R } \left(\dashint_{ {\frac34} B }   |Du |^{2\beta+2 }   \,dx\right)^{1/2} +  \left(\dashint_{ {\frac34} B}(|Du|^{\bz}|f|)^{2+\dz} \,dx\right)^{\frac1{2+\dz}}\right]\quad \forall B=B(z,R)\Subset U. 
\end{align}
 
%
 
 \end{enumerate}

\end{thm}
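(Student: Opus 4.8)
The plan is to run the standard regularization-plus-compactness scheme for second order Sobolev regularity of normalized $p(\cdot)$-Laplace type equations, the new ingredients being the control of the datum $f$ and of the gradient of the exponent. First I would fix $U\Subset\Omega$, mollify $p(\cdot)$ and $f$ to smooth $p_\ez$ and $f_\ez$ (with $\|Dp_\ez\|_{L^\fz(U)}$ bounded by $\lip(p(\cdot),U')$ for a fixed $U'$ with $U\Subset U'\Subset\Omega$), replace the singular quotient $\Delta_\fz u/|Du|^2$ by $D^2u\,Du\cdot Du/(|Du|^2+\ez^2)$, and solve the Dirichlet problem for the resulting uniformly elliptic, smooth, non-divergence quasilinear equation on a subdomain with the same boundary values as $u$; this produces $u^\ez\in C^\fz$. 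By the local $C^{1,\az}$ regularity of viscosity solutions to \eqref{eq1f} (with estimates uniform in $\ez$) together with uniqueness via the comparison principle, $u^\ez\to u$ in $C^1_\loc$ along a subsequence.

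The heart of the proof is a pointwise inequality for the smooth solutions. Setting $w^\ez:=(|Du^\ez|^2+\ez^2)^{\bz/2}$ and $G^\ez:=w^\ez Du^\ez$, I would differentiate the regularized equation, expand $DG^\ez$ in terms of $D^2u^\ez$ and $Du^\ez$, rotate the frame pointwise so that $Du^\ez\parallel e_1$, and use the equation to eliminate one scalar combination of the second derivatives. The outcome should be, a.\,e.\ in $U$,
\begin{align*}
|DG^\ez|^2\le -C(p^U_\pm,\bz)\det DG^\ez+C(p^U_\pm,\bz)\Big[|Du^\ez|^{2\bz}f_\ez^2+|Du^\ez|^{2\bz+2}|Dp_\ez|^2\big(\log(2+|Du^\ez|)\big)^2\Big]+\mathrm{Err}_\ez,
\end{align*}
where $\mathrm{Err}_\ez$ collects the terms carrying an explicit factor $\ez^2$, and the $\log$-factor is the usual artefact of passing between the normalized form and the divergence (``strong'') form of the $p(\cdot)$-Laplacian. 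Two features of dimension $n=2$ are used here: the single scalar equation controls exactly the ``missing'' combination of second derivatives, so that modulo the Jacobian $\det DG^\ez$ and the explicit right-hand side the whole Hessian is controlled; and the structural coefficient in front of $-\det DG^\ez$ has the favourable sign for every $\bz\ge0$ and every $1<p<\fz$, which is why no lower bound on $\bz$ is needed (consistently with $\bz_\star(2,\cdot)\equiv-1$, in contrast with dimension $n\ge3$).

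Granting this inequality, I would multiply by $\phi^2$, $\phi\in C^\fz_c(B)$ with $\phi\equiv1$ on $\tfrac12B$ for a ball $B=B(z,R)\Subset U$, integrate, and handle $\int\phi^2\det DG^\ez$ by the null Lagrangian identity $\det DG=\pa_1(G_1\pa_2G_2)-\pa_2(G_1\pa_1G_2)$, which transfers one derivative onto $\phi$; bounding the result by $\int\phi|D\phi|\,|G^\ez|\,|DG^\ez|$ and absorbing with Young's inequality gives a Caccioppoli estimate
\begin{align*}
\int_{\frac12B}|DG^\ez|^2\,dx\le\frac{C(p^U_\pm,\bz)}{R^2}\int_B|G^\ez|^2\,dx+C(p^U_\pm,\bz)\int_B(|Du^\ez|^\bz|f_\ez|)^2\,dx+o_\ez(1),
\end{align*}
with $o_\ez(1)\to0$ as $\ez\to0$ by the uniform $C^1$ bounds and $Dp_\ez\to Dp$ in $L^2_\loc$. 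Letting $\ez\to0$ and using $G^\ez\to|Du|^\bz Du$ in $C^0_\loc$ together with weak lower semicontinuity yields $|Du|^\bz Du\in W^{1,2}_\loc(\Omega)$; taking $\bz=0$ gives $D^2u\in L^2_\loc(\Omega)$, which is part (i). For part (ii) I would upgrade the Caccioppoli estimate to a reverse Hölder inequality: applying Sobolev--Poincaré on $B$ in $n=2$ (so that $\tfrac1{R^2}\dashint_B|G-\vec c_B|^2\ls(\dashint_B|DG|)^2$ with $\vec c_B$ the average, used also in Caccioppoli), using $|Du|^\bz|f|\in C^0\subset L^{2+\dz_0}_\loc$ for any $\dz_0>0$, and invoking Gehring's lemma with a forcing term, one obtains $\dz=\dz(p^U_\pm,\bz)>0$, the higher integrability $|Du|^\bz Du\in W^{1,2+\dz}_\loc(U)$, and the quantitative bound \eqref{bound4} (its first term coming from Caccioppoli with $\vec c=0$ applied on $\tfrac34B$).

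The main obstacle is the pointwise inequality of the second paragraph. One has to carry out the algebra so that the coefficient multiplying $-\det DG^\ez$ comes out nonnegative for all admissible $p$ and all $\bz\ge0$, and one has to show that the term coming from $Dp_\ez$, namely $|Du^\ez|^{p_\ez-2}(\log|Du^\ez|)\,Dp_\ez\cdot Du^\ez$ paired with the natural test function, is only logarithmically worse than the leading scaling --- so that it is harmless in the absorption step --- while making this precise near $\{Du^\ez=0\}$. Once this inequality is in hand, the passage to the limit and the Gehring iteration are routine adaptations of the homogeneous case (Theorem 1.1).
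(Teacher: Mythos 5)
Your overall scheme (regularize, prove a pointwise Hessian bound, Caccioppoli, Sobolev--Poincar\'e, Gehring) matches the paper's, but there are two genuine gaps that the paper's actual argument is designed to avoid.

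First, and most seriously, you close the approximation step by invoking ``uniqueness via the comparison principle'' to identify the subsequential limit of $u^\ez$ with $u$. But the paper explicitly notes (Remark 4.1(ii)) that uniqueness of viscosity solutions to $-\Delta^N_{p(\cdot)}u=f$ (and even to the homogeneous equation) is \emph{open}. Without it, your compactness argument only shows that \emph{some} solution with the same boundary values has the asserted regularity, not the given $u$. The paper sidesteps this by modifying the approximating Dirichlet problem \eqref{neq4-1}: the right-hand side is $f^\ez+u^{0,\ez}-v^\ez$, so the limit equation is $-\Delta^N_{p(\cdot)}v=f+(u-v)$, which is strictly proper in $v$ and therefore enjoys a comparison principle (by \cite{SJ2022}); since $u$ itself also solves this modified equation, uniqueness for the modified problem forces $v=u$. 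This zero-order trick is precisely the ingredient your proposal is missing.

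Second, your pointwise inequality is not the one the paper proves, and the discrepancy matters. You propose to differentiate the equation (or pass through the divergence/strong $p(x)$-Laplacian form), which inevitably produces the extra term $|Du^\ez|^{2\bz+2}|Dp_\ez|^2(\log(2+|Du^\ez|))^2$. This term does \emph{not} vanish as $\ez\to0$ (since $Dp_\ez\to Dp\neq0$), so it survives into the Caccioppoli estimate and into the Gehring step, and the resulting bound would be strictly weaker than the claimed \eqref{bound4}, which contains no $Dp$-dependence at all. The paper avoids this entirely: Lemma 3.1 never differentiates the equation or passes to divergence form. It is a purely pointwise \emph{algebraic} estimate at each fixed $x$, combining the structural identity for $\sigma_2(D[(|Dv|^2+\ez)^{\bz/2}Dv])$ (Lemma 2.2) with the refined Cauchy--Schwarz inequality of Lemma 2.1 and with the equation \eqref{eq3.1} used only to substitute $\Delta v=-(g-v)-(p(x)-2)\Delta_\fz v/(|Dv|^2+\ez)$ at the same point $x$; consequently $Dp$ never appears, and \eqref{neq3.1} has only the $\sigma_2$ and $(g-v)^2$ terms. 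In two dimensions $\sigma_2=-\det$, and the inequality holds for all $\bz>\bz_\star(2,t_+)=-1$, which is where your observation that no lower bound on $\bz$ is needed (beyond the $\bz\ge0$ required for passing to the limit) is in fact correct. So the high-level intuition is right, but the route through the strong form introduces an obstruction that the paper's direct non-divergence computation was engineered to avoid.
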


\begin{thm}Let $\Omega\subset\rn$ be a   domain with $n\ge3$,  $p(\cdot)\in C^{0,1}(\Omega;(1,\fz))$ and $f\in C^0(\Omega)$. 
For any   
 viscosity solution $u$  to the equation {\eqref{eq1f}}, the following hold.

\begin{enumerate}
\item[(i)] One has $D^2u\in L^2_\loc(\Omega^\ast_n)$.  Given any    subdomain $U\Subset\Omega_n^\ast$, one has  $D^2u\in L^{2+\dz}_\loc(U)$ for some $ \dz=\dz(p^U_\pm,n)>0$.   
  
\item[(ii)]   
Given any    subdomain $U\Subset\Omega$ and any $\bz\in(\bz_{\star}(n,p^U_+),\fz)\cap[0,\fz)$,
one has  $ |Du|^{ \bz}Du \in {W}^{1,2+\dz}_\loc (U)$ for some $\dz=\dz(p^U_\pm,n,\bz)>0$ and  a {\color{red}quantitative} upper bound  {\eqref{bound4}}
 with the constant $C(p^U_\pm,\bz)$ therein replaced by   $C(p^U_\pm,n,\bz)$. 
 
 \end{enumerate}
\end{thm}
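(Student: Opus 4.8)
The plan is to prove Theorem 1.4 by regularization together with a Bochner‑type a priori estimate, in parallel with the homogeneous case (Theorem 1.2) but carrying the datum $f$ along exactly as Theorem 1.3 does relative to Theorem 1.1. Fix $U\Subset\Omega$, and for part (ii) fix $\bz\in(\bz_\star(n,p^U_+),\fz)\cap[0,\fz)$; part (i) is the case $\bz=0$ with $U\Subset\Omega^\ast_n$, for which $\bz_\star(n,p^U_+)<0$. On a ball $B\Subset U$ I would regularize: choose smooth $p_\ez\to p(\cdot)$ with $\lip(p_\ez,B)\le\lip(p(\cdot),\overline U)+1$ and mollifications $f_\ez\to f$ converging locally uniformly, and solve the uniformly elliptic Dirichlet problem
\[
-\Delta u_\ez-(p_\ez-2)\frac{D^2u_\ez\,Du_\ez\cdot Du_\ez}{|Du_\ez|^2+\ez}=f_\ez\ \text{ in } B,\qquad u_\ez=u\ \text{ on } \partial B.
\]
Classical Schauder and Krylov--Safonov theory give $u_\ez\in C^\infty(B)$, the known interior $C^{1,\az}$ estimates for the normalized $p(\cdot)$‑operator give $\ez$‑uniform local bounds for $Du_\ez$, and stability of viscosity solutions together with the comparison principle for \eqref{eq1f} give $u_\ez\to u$ and $Du_\ez\to Du$ locally uniformly in $B$.

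The heart of the matter is an $\ez$‑uniform a priori estimate for the smooth $u_\ez$. Put $w_\ez:=|Du_\ez|^2+\ez$ and $V_\ez:=w_\ez^{\bz/2}Du_\ez$, so that $|DV_\ez|^2\simeq w_\ez^{\bz}|D^2u_\ez|^2$. Differentiating the regularized equation in $x_k$, testing against $\eta^2w_\ez^{\bz}\,\partial_k u_\ez$, summing over $k$ and integrating by parts, then carrying out the Bochner‑type rearrangement of \cite{dpzz19} (and using the chain rule to re‑express the leading terms via $\sigma_2(DV_\ez)$), gives, schematically,
\[
\int_B\eta^2|DV_\ez|^2\,dx\le C\int_B\eta^2\,\sigma_2(DV_\ez)\,dx+C\int_B\Bigl(|D\eta|^2w_\ez^{\bz}|Du_\ez|^2+\eta^2|Dp_\ez|^2w_\ez^{\bz+1}+\eta^2w_\ez^{\bz}|f_\ez|^2\Bigr)dx,
\]
where the inequality bounding $|DV_\ez|^2$ by $\sigma_2(DV_\ez)$ holds with a constant $C=C(p^U_\pm,n,\bz)<\fz$ \emph{precisely because} $\bz>\bz_\star(n,p_\ez)$ on $\operatorname{supp}\eta$ --- equivalently, for $\bz=0$, because $\operatorname{supp}\eta\subset\Omega^\ast_n$, i.e. $p_\ez<3+\tfrac2{n-2}$; this is exactly where the hypothesis of the theorem enters, and $C$ is independent of $\ez$. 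The datum $f_\ez$ enters only through a term of the type $\int\eta^2w_\ez^{\bz}|f_\ez|\,|D^2u_\ez|$, split off by Young's inequality, and the $Dp_\ez$‑term is controlled by $\lip(p(\cdot),\overline U)+1$.

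Now $\sigma_2(DV_\ez)$ is a null Lagrangian, i.e. a sum of divergences in $V_\ez$; hence for every constant vector $\vec c$ one has $\int_B\eta^2\sigma_2(DV_\ez)\,dx\le\tfrac12\int_B\eta^2|DV_\ez|^2\,dx+C\int_B|D\eta|^2|V_\ez-\vec c|^2\,dx$, and absorbing the first term on the left (and using $w_\ez^{\bz}|Du_\ez|^2=|V_\ez|^2$) turns the previous display, for $\eta$ a cut‑off between $\tfrac14B$ and $\tfrac34B$, into the Caccioppoli‑type inequality
\[
\int_{\frac14B}|DV_\ez|^2\,dx\le C\Bigl[\frac1{R^2}\int_{\frac34B}|V_\ez-\vec c|^2\,dx+\int_{\frac34B}w_\ez^{\bz}|f_\ez|^2\,dx\Bigr]\qquad\forall\,\vec c.
\]
Combined with the Sobolev--Poincaré inequality this is a reverse Hölder inequality for $|DV_\ez|$, so Gehring's lemma yields, for some $\dz=\dz(p^U_\pm,n,\bz)>0$, the self‑improvement to exponent $2+\dz$ and hence the quantitative bound \eqref{bound4} with constant $C(p^U_\pm,n,\bz)$ (taking $\vec c=0$), all uniformly in $\ez$. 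Letting $\ez\to0$ --- local uniform convergence of $u_\ez$ and $Du_\ez$, weak compactness of $DV_\ez$ in $L^{2+\dz}_\loc(\tfrac14B)$, and $w_\ez^{\bz/2}f_\ez\to|Du|^{\bz}f$ locally uniformly --- transfers everything to $u$, giving $|Du|^{\bz}Du\in W^{1,2+\dz}_\loc(U)$ and \eqref{bound4}; taking $\bz=0$ and exhausting $\Omega^\ast_n$ by subdomains $U\Subset\Omega^\ast_n$ yields part (i).

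I expect the principal obstacle to be the Bochner‑type a priori estimate: organizing the integrations by parts in this non‑divergence setting so that the quadratic form in $D^2u_\ez$ produced by the nonlinear second‑order term is controlled by $\sigma_2(DV_\ez)$ with a \emph{finite} constant precisely at the threshold $\bz_\star(n,p)$, while keeping the $Dp_\ez$‑error --- bounded but not small --- absorbable and confining the influence of $f_\ez$ to the single term $\int w_\ez^{\bz}|f_\ez|^2$ that becomes the $|Du|^{\bz}|f|$ contribution in \eqref{bound4}. Checking that every constant is genuinely independent of $\ez$ and of $u$, and that the set $\{Du=0\}$, on which \eqref{eq1f} is meant only in the viscosity sense, causes no loss thanks to the $\ez$‑regularization, is the remaining delicate point.
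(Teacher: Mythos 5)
Two issues, one of which is a genuine gap.

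\textbf{The approximation and the convergence $u_\ez\to u$.} You regularize with
\[
-\Delta u_\ez-(p_\ez-2)\frac{\Delta_\fz u_\ez}{|Du_\ez|^2+\ez}=f_\ez\ \text{ in }B,\qquad u_\ez=u\ \text{ on }\partial B,
\]
and then invoke ``stability of viscosity solutions together with the comparison principle for \eqref{eq1f}'' to conclude $u_\ez\to u$. But comparison (equivalently, uniqueness) for the normalized $p(\cdot)$-Laplace equation \eqref{eq1f} --- and even for \eqref{eq1} --- is \emph{not known}, as Remark 4.1(ii) makes explicit. Stability only tells you that a subsequential limit $v$ of $u_\ez$ is a viscosity solution of $-\Delta^N_{p(\cdot)}v=f$ in $B$ with $v=u$ on $\partial B$; without comparison you cannot identify $v$ with $u$, so the whole transfer of the a priori estimate from $u_\ez$ to $u$ collapses. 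The paper sidesteps this by inserting a proper zeroth-order term in the approximation, namely \eqref{neq4-1}:
\[
-\Delta w-(p^\ez-2)\frac{\Delta_\fz w}{|Dw|^2+\ez}=f^\ez+u^{0,\ez}-w,
\]
so that the limit equation $-\Delta^N_{p(\cdot)}v=f+(u-v)$ \emph{does} enjoy comparison (by \cite{SJ2022}), and both $u$ and $v$ solve it, forcing $v=u$. This is an essential device, not cosmetic, and your proposal is missing it.

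\textbf{The key estimate.} Your plan is a Bochner-type argument: differentiate the regularized equation in $x_k$, test against $\eta^2 w_\ez^\bz\partial_k u_\ez$, and rearrange. This is not what the paper does. Lemma 3.1 establishes the inequality
\[
|D[(|Dv|^2+\ez)^{\bz/2}Dv]|^2\le C_\star\,\sigma_2\big(D[(|Dv|^2+\ez)^{\bz/2}Dv]\big)+\wz C_\star\,(|Dv|^2+\ez)^\bz(g-v)^2
\]
\emph{pointwise}, by combining the algebraic identity \eqref{id} for $\sigma_2$, the fundamental inequality \eqref{keyin2} for $\Delta v\,\Delta_\fz v$, and a \emph{substitution} of the PDE for $\Delta v$ (\eqref{eeq3.3-1}), then verifying a quadratic form ${\bf K}_2\ge0$ under the threshold $\bz>\bz_\star(n,t_+)$. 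Crucially, $p$ enters only as a bounded coefficient $t_-\le p\le t_+$; \emph{the equation is never differentiated and $Dp$ never appears}. In your version, differentiating the equation inevitably produces the term $\eta^2|Dp_\ez|^2 w_\ez^{\bz+1}$ you wrote down. That term has no smallness and cannot be absorbed; at best it survives as an extra contribution whose size depends on $\lip(p,\overline U)$, which would degrade the constants in \eqref{bound4} from $C(p^U_\pm,n,\bz)$ to $C(p^U_\pm,n,\bz,\lip(p))$ --- strictly weaker than what the theorem asserts, since $\lip(p,U)$ is not controlled by $p^U_\pm$. You correctly flag this step as ``the principal obstacle,'' but the obstacle is real under your scheme and only disappears if you switch to the paper's pointwise-algebraic route, which avoids differentiating the coefficient entirely. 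The remainder of your plan --- divergence structure of $\sigma_2$ for the Caccioppoli step (this is Lemma \ref{lem2.3}/\ref{id2.4-1}), Sobolev--Poincar\'e, Gehring --- matches the paper.
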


Before sketching the ideas to prove Theorem 1.1--Theorem 1.4, we recall several well-known studies   in this direction in the literature. 

In the special case that the  exponent $p(\cdot)$ is a constant,   we denote $p(\cdot)$ by $p$. 
For $1<p<\fz$,     
the normalized $p$-Laplacian  reads as 
  $$ \Delta^N_{p }u= \Delta u{+}(p-2) \frac{\Delta_\fz  u}{|Du|^2}.$$
The  normalized $p$-Laplace 
equation   $ -\Delta^N_{p }u  =0 $ and its inhomogeneous equation
$-\Delta^N_pu=f$ with $f\in C^0(\Omega)$ are partially motivated by the tug-of-war;
see \cite{PS2008}. 
Viscosity solutions  to them in the sense of Crandall-Lions are well-defined. 
Multiplying    $ \Delta^N_{p }u   $ by $|Du|^{p-2}$,  
one gets formally the  $p$-{Laplacian} $\Delta_pu$, that is, 
  $$\Delta_pu:={\rm div}(|Du|^{p-2}Du)=|Du|^{p-2} \Delta^N_{p }u .$$
Thanks to the divergence structure in $\Delta_pu$, weak solutions  to    $\Delta_pu=0$ in $\Omega$ are well-defined in the Sobolev space $W^{1,p}(\Omega)$ and, usually, called as $p$-harmonic functions in $\Omega$. 
It was well-known that $u$ is a 
$p$-harmonic function  in $\Omega$ if and only if   $u $  is {a} viscosity solution to the equation  $\Delta_pu =0$ in $\Omega$,
and also if and only if $u $  is {a} viscosity solution  to the   equation $\Delta_p^Nu= 0$ in $\Omega$; see \cite{JLM2001,JJ2012,PS2008}. 
The existence and uniqueness of $p$-harmonic functions  are standard. 
For the $C^{1,\alpha}$-regularity of $p$-harmonic functions, we refer   to \cite{U1968,Uhlenbeck1977,Lewis1983,D1983,evans1982}. 
For their second order regularity, 
 when  $\gz>\bz_\star(n,p)$,    each $p$-harmonic function  $u$ satisfies  $D[|Du|^\gz Du]\in L^{2+\dz}_\loc$
for some $\dz>0$ depending on $n,p,\gz$; see for example {\color{red}\cite{dpzz19,ss2022}}.
 When $1<p<3+\frac2{n-2}$, since $\bz_\star(n,p)<0$, one may take $\gz=0$ here so to get  $D^2u\in L^{2+\dz}_\loc$, which was originally proved in \cite{MW1988} via the Cordes condition. 
In dimension 2,  $p$-harmonic functions enjoy    $C^{k,\alpha}$ and $W^{k+2,q}_\loc$-regularity 
with  sharp ranges of $k,\alpha,q$, and moreover,  {\color{red}for any $\bz>-1$ }the map {\color{red}$(x,y)\to  |Du|^\bz(u_x,-u_y)$} was shown to be quasiregular,
that is, {\color{red}$$|D[|Du|^\bz Du]|^2\le -K_{p,\beta}\det D[|Du|^\bz Du];$$ }
 see \cite{IM1989,Aron1989,BI1987,MW1988-0}. 
Regards  of the inhomogeneous   equation $-\Delta^N_pu=f$ with $f\in C^0(\Omega)$, 
we refer to   \cite{PS2008,RE2016} for the existence  of viscosity solutions.
The uniqueness  was established when 
  $f$ has constant sign. 
For the $C^{1,\alpha}$-regularity of viscosity solutions  we refer to Attouchi et al \cite{APR2017}.
When $1<p<3+\frac2{n-2}$,  viscosity solutions  were  shown in  \cite{AR2018} 
via Cordes conditions to enjoy $W^{2,2}_\loc$-regularity.



In general,  assume that  the exponent  $p(\cdot)\in C^0(\Omega;(1,\fz))$  is not a constant. 
Multiplying  $\Delta^N_{p(\cdot)}u$ by $|Du|^{p(\cdot)-2}$, one gets formally the  
strong $p(x)$-{Laplacian}   $\Delta^S_{p(\cdot)}u$  as introduced by Adamowicz-H\"ast\"o  \cite{AH2010, AH2011},   that is, 
 $$\Delta^S_{p(\cdot)}u = |Du|^{p(\cdot)-2}\Delta u+(p(\cdot)-2) |Du|^{p(\cdot)-2}\frac{\Delta_\fz u}{|Du|^2}.$$
Note that, under certain Sobolev regularity in $p(\cdot)$,
one would write  
  {\color{red}$\Delta^S_{p(\cdot)}$   } as 
\begin{equation}\label{eq1.2}
 \Delta^S_{p(\cdot)}u =-{\rm div}(|Du|^{p(\cdot)-2}Du)+|Du|^{p(\cdot)-2}\log(|Du|)Du\cdot Dp.
\end{equation} 
{\color{red}Since it consists of the divergence term $-{\rm div}(|Du|^{p(\cdot)-2}Du)$ and other term involving gradients, 
 weak solutions can be defined to the equation   $\Delta^S_{p(\cdot)}u =0$.  }
We say 
$u\in W_\loc^{1,p(\cdot)}(\Omega)$ is  a weak solution to the equation $\Delta^S_{p(\cdot)}u=0$ in $\Omega$ provided that 
$$
\int_\Omega |Du|^{p(\cdot)-2}Du\cdot D\phi\,dx+\int_\Omega |Du|^{p(\cdot)-2}\log(|Du|)Du\cdot Dp\phi\,dx=0\quad\forall  \phi\in W_0^{1,p(\cdot)}(\Omega). 
$$ 
Assuming   $p(\cdot)\in C^{0,1}(\Omega)$ in addition,
Adamowicz-H\"ast\"o \cite{AH2010} built up 
an existence  of  such weak solutions 
their interior H\"older continuity;
 and 
 moreover, in dimension $n=2$ they showed any such weak solution  $u$ 
  satisfies $D^2u\in L^2_\loc(\Omega)$ and 
  the map $(x_1,x_2)\to (u_{x_1},-u_{x_2})$ is  quasiregular in the sense 
that   
$$|D^2u|^2\le -\frac12\bigg(p(\cdot)-1+\frac1{p(\cdot)-1}\bigg)\det D^2u\quad \mbox{a. e. in}\ \Omega.$$
 When $p(\cdot)$ is log-H\"older continuous and satisfies $Dp\in L^n\log L^n$ or $Dp\in L^{q(x)}$ where $q\ge\max\{p,n\}+\dz$ for some $\dz>0$, Zhang-Zhou \cite{ZZ2012} obtained the $C^{1,\alpha}$-regularity of weak solutions to $\Delta^S_{p(\cdot)}u=0$.  The uniqueness of weak solutions to the equation $ \Delta^S_{p(\cdot)}u=0$ remains unknown. 
 
  Under  merely $p(\cdot)\in C^0(\Omega;(1,\fz))$, an existence of viscosity  solutions to $ \Delta^N_{p(\cdot)}u=0$ in $\Omega$ was established in \cite{AHP2017}. 
Assuming  $p(\cdot)\in C^{0,1}(\Omega)$ in addition,  Siltakoski \cite{SJ2018} proved that viscosity solutions to \eqref{eq1} actually coincide with weak solutions to $\Delta^S_{p(\cdot)}u=0$ in $\Omega$.  
 Siltakoski \cite{SJ2022} also obtained {\color{red} Lipschitz estimate and} $C^{1,\alpha}$-regularity of viscosity solutions to $-\Delta_{p(\cdot)}^Nu=f$ whenever $f\in C^0(\Omega)$. 
The uniqueness of viscosity  solutions to $ -\Delta^N_{p(\cdot)}u=0$ and  $ -\Delta^N_{p(\cdot)}u=f$   remains unknown. 

Under the assumption $p(\cdot)\in C^{0,1}(\Omega;(1,\fz))$,  in  Theorems 1.1 and {1.2} we establish a {\color{red}quantitative} second order regularity for viscosity  solutions to  the {\color{red}equation} \eqref{eq1}, and hence by Siltakoski \cite{SJ2018}, for 
 weak solutions to  the {\color{red}equation}  $\Delta^S_{p(\cdot)}u=0$. 
Moreover, in dimension $n=2$,   Theorems 1.1 gives the quasiregular property of  the map 
  $(x_1,x_2)\to |Du|^\bz (u_{x_1},-u_{x_2})$ for all  $\bz\ge 0$ and in dimension $n\ge3$,  Theorem {1.2}   bounds  $\sigma_2(D[|Du|^\bz Du] )$ via $|D[|Du|^\bz Du]|^2$ from below. These extend the above results  in dimension $n=2$ and $\bz=0$ obtained by  Adamowicz-H\"ast\"o \cite{AH2010}.   
In Theorems {1.3} and 1.4 we further build up  a {\color{red}quantitative} second order regularity for viscosity  solutions to  the inhmogenous {\color{red}equation} \eqref{eq1f}. 
Note that, when $f\ne 0$, Theorems {1.3} and 1.4  are  new.

Now we sketch the idea to prove Theorem 1.1--Theorem1.4, which is motivated by \cite{dpzz19,ss2022, SJ2022}.  Let $1< t_-\le p(x)\le t_+<\fz$ for all $x\in U$, 
and    $\bz\in(\bz_\star(n,t_+),\fz) $.   
Firstly,  we   prove   the following key pointwise inequality
\begin{align}\label{eq3.1in}
|D[(|Dv|^2+\ez)^{\bz/2}Dv]|^2\le C _\star (t_\pm,n,\beta) \sigma_2(D[(|Dv|^2+\ez)^{{\bz/2}}Dv])+ \wz C_\star (t_\pm,n,\beta)(|Dv|^2+\ez)^{\bz}(g-v)^2 \quad \mbox{in $B$} 
\end{align}
for any viscosity solution $v$ to the equation 
$$
-\Delta v-(p(\cdot)-2) \frac{\Delta_\fz v}{\left|D v\right|^{2}+\ez} = g-v\quad {\rm in}\ B,
$$
where $p$ is smooth in any given ball $B\Subset\Omega$. 
The proof is partially motivated by \cite{dpzz19,ss2022}; see Lemma  3.1 in Section 3 for more details.

 Next, given any viscosity solution $u$ to the equation \eqref{eq1f}, 
following \cite{SJ2022} we approximate $u$ by smooth functions $ \{v^\ez\}_{\ez\in(0,\ez_B)} $ for some small $\ez_B>0$. Here for any such $\ez$, $v^\ez$
 solves the Dirichlet probelm 
\begin{align}\label{neq4-1in}
\left\{
\begin{array} {ll}\displaystyle 
- \Delta v^\ez-(p^\ez(x)-2)\frac{\langle D^2v^\ez Dv^\ez,Dv^\ez\rangle}{|Dv^\ez|^2+\ez} =f^\ez+u^{0,\ez}-v^\ez\quad& \displaystyle\mbox{in $B$}\\[3mm]
  \displaystyle v^\ez=u\quad& \displaystyle \mbox{on $\partial B$}, 
\end{array}\right.
\end{align}
where   $f^\ez$ and $p^\ez,u^{0,\ez} $ are smooth approximation of 
$f, p, u$. Applying \eqref{eq3.1in} to $v^\ez$, using the divergence structure of $\sigma_2$ and sending $\ez\to0$, we are able to show that 
  $|Du|^{ \bz}Du\in W ^{1,2 }_\loc(U) $  with  a  {\color{red}quantitative} upper bound
 \begin{align}\label{bound4in}
\left\| D[ |Du | ^{\beta }Du ]\right\| _{L^{2 }(\frac 14B )}
 \le&  2^6
 C_\sharp (t_\pm,n,\bz)\left[\frac1{R } \left\| |Du  | ^{ \beta  }Du-\vec c\right\|_{L^2(\frac 12B )}+ \Big\| |Dv |^\beta f\Big\|_{L^{2 }(\frac 12 B )}\right]\quad \forall B=B(z,R)\Subset U 
\end{align}
 {  and also with a pointwise upper bound}
\begin{align}\label{eq3.1in-1}
 |D[|Du|^\bz Du]|^2\le C_\star (t_\pm,n,\beta) \sigma_2(D[ |Du|^ {\bz} Du])+   \wz C_\star (t_\pm,n,\beta)|Du|^ {2\bz}f^2 \quad \mbox{in $U$}. 
\end{align}
See Theorem 4.1 for details. 

Finally, applying  Sobolev's imbedding and Gehring's lemma, we get 
$|Du|^{ \bz}Du\in W ^{1,2+\dz }_\loc(U) $  and also the desired {\color{red}quantitative} upper bound;
see Lemma 4.1.  Theorem 1.1--Theorem 1.4 then follow  from Theorem 4.1 and Lemma 4.1.

We end the introduction by the following remark.

\begin{rem}\rm 
(i)   The restriction $\bz\in[0,\fz)$ appeared in Theorem   1.1--Theorem 1.4 is used to 
 guarantee the approximation argument in our approach. See Remark 4.1 (i) for  details. 
In Theorems 1.1 and 1.3, it  is not clear where 
we can relax  $\bz\in[0,\fz)$ to $\bz\in(-1,\fz)$. 

In  Theorem  {1.2} and  Theorem1.4, 
when $p^U_+> 3+\frac2{n-2}$, since $\bz_\star(n,p^U_+)>0$,
we know that $\bz\in(\bz_\star(n,p^U_+),\fz)$ gives $\bz\in[0,\fz)$ apparently; 
 while when $p^+\le 3+\frac2{n-2}$ ,  
  note that $\bz_\star(n,p^U_+)\le 0$, and 
    it is not clear for us whether  Theorem  {1.2} and Theorem 1.4  work for all $ \bz\in(\bz_\star(n,p^U_+),\fz)$.
 
If the uniqueness of viscosity solutions to \eqref{eq1} were proved,  
one  may obtain that Theorem 1.1  holds for all $\bz>-1$ and also 
Theorem 1.2 holds for all $\bz>\bz_\star(n,t_+)$ via adapting the current {\color{red}approximation} equations.  See Remark 4.1 (ii) for the reasons.

(ii) It is not clear whether   the assumption $p(x)\in C^{0,1}$ in   Theorem 1.1--Theorem 1.4  can be relaxed to merely 
$p(x)\in C^{0}$.  Recall  that $p(x)\in C^0$ is the most natural condition to define viscosity solutions to equations \eqref{eq1} and \eqref{eq1f}.
  The reason why we use $p(x)\in C^{0,1}$ is to gurantee a suitable convergence
  as did in the proof of Theorem 4.1 and also $Du\in L^\fz_\loc$.  See Remark 4.1 (iii) for the details. 


\end{rem}

{\color{red}
\begin{rem}\rm 
As referee pointed out, via using the known regularity theory of quasiregular mappings,  the range $\bz\in[0,\fz)$ in Theorem 1.1 can be relaxed to $\bz\in(-1,\fz)$ so that $|Du|^\bz Du\in W^{1,2}_\loc$. Note that 
the range $\bz\in(-1,\fz)$ here is sharp in the case that $p(x)$ is a constant; for details see \cite[Appendix]{dpzz22} and \cite{IM1989}. 
Precisely, by Theorem 1.1 with $\beta=0$, we know that    the mapping $x\to (u_{x_1},-u_{x_2})$ is locally quasiregular. Given any $\bz\in(0,1)$, 
denote by $\rho_\bz$ the corresponding radial stretching in plane, that is, 
$\rho_\bz(x)=|x|^\bz x$ for all $x\in\rr^2$. It is well-known that $\rho_\bz$ is a quasiconformal mapping; see \cite{SR1993}. 
The mapping  $x\to |Du|^\bz (u_{x_1},-u_{x_2})$ is  then  the composition of $\rho_\bz$ and   the mapping $x\to (u_{x_1},-u_{x_2})$, and hence is local quasiregular (see \cite[Theorem 9.4]{BI1983} and \cite[Section 2]{BI1987}), that is, $|Du|^\bz Du\in W^{1,2}_\loc(\Omega)$ and \eqref{bound3} holds. 
Moreover, in any domain $ U\Subset \Omega$, by Gehring's lemma (see \cite[Lemma 3.2.23]{MPS2000}),  
$|Du|^\bz Du\in W^{1,2+\dz}(U)$ 
for some $ \dz>0$ depending in $U,\bz$ 
 with 
$$\|D[|Du|^\bz Du] \|_{L^{2+\delta}(B)}\le C
  \|D[|Du|^\bz Du ] \|_{L^{2 }(2B)}\quad\forall \mbox{ball $B\Subset 2B\Subset U$}.$$ 
Thanks to \eqref{bound3} and the divergence of  $-\det D[|Du|^\bz Du]$,
  via integration by parts and also the H\"older inequality, one further conclude the quantitative upper bound \eqref{bound2}; we omit the details.

We further remark that Theorem 1.1 with $\bz=0$ was already obtained by Adamowicz-H\"ast\"o \cite{AH2010}.
Theorem 1.1 with $\bz>0$ can be also deduced from the above argument via radial stretchings $\rho_\bz$ and quasiregular theory. 
The contribution here is to give  a direct PDE's approch proof for Theorem 1.1 without  using radial stretchings and quasiregular theory.  
 \end{rem}

{\color{red} {\bf Acknowledgments}
We would like to thank  referees for several valuable suggestions and comments, in particular,  the approach in Remark 1.2, which  improved the presentation of this paper. 
} 
}

 \section{Prelimilaries}

In this section, we recall some  definitions and lemmas needed in this paper. 
We always let $\Omega\subset \rr^n$ be a bounded domain and $f\in C^0(\Omega)$. 
We recall from \cite{SJ2022,CIL92} the following definition of viscosity solutions  to the equation  \eqref{eq1f} and hence, 
 when $f=0$, to the    equation \eqref{eq1}.  
\begin{defn}\label{def2-1}  
 A lower semicontinuous  function $u:\Omega\to\rr$ is a viscosity supersolution to  $-\Delta^N_{p(\cdot)}u= f$ in $  \Omega$  
if for all $x_0\in\Omega$ and $\phi\in C^2(\Omega)$ such that $u-\phi$ attains a local minimum at $x_0$, one has
\begin{align*}
\left\{\begin{array}{ll}
\displaystyle 
-\Delta^N_{p(x_0)}\phi(x_0)\ge f(x_0) \quad &{\rm if}\ D\phi(x_0)\neq 0;\\
\displaystyle -\Delta\phi(x_0)-(p(x_0)-2)\lambda_{\min}(D^2\phi(x_0))\ge f(x_0) \quad &{\rm if}\  D\phi(x_0)=0\ {\rm and}\ p(x_0)\ge2;\\
\displaystyle -\Delta\phi(x_0)-(p(x_0)-2)\lambda_{\max}(D^2\phi(x_0))\ge f(x_0) \quad &{\rm if}\  D\phi(x_0)=0\ {\rm and}\ 1<p(x_0)<2,
\end{array}\right.
\end{align*}
where  $\lambda_{\min}(D^2\phi(x_0))$ and $\lambda_{\max}(D^2\phi(x_0))$ denote the minimum and maximum eigenvalue of the matrix 
$D^2\phi(x_0)$ respectively. 
 

An upper semicontinuous  function $u:\Omega\to\rr$ is a viscosity subsolution to $-\Delta^N_{p(\cdot)}u= f$ in $  \Omega$  if $-u$ is a viscosity supersolution.

A continuous function $u:\Omega\to\rr$ is a viscosity solution to $-\Delta^N_{p(\cdot)}u= f$ in $  \Omega$  if
$u$ is both viscosity supersolution and subsolution.

\end{defn}

We refer to \cite{kzz19,dpzz19,ss2022} for the following  fundamental inequality for $\Delta v\Delta_\fz v$. 
\begin{lem}\label{lem2.1}
 Let $U$ be a domain of  $\rr^n$. For any $v\in C^\fz(U)$, when $n=2$, one has
\begin{equation}\label{keyin0}
[|D^2v|^2-(\Delta v)^2]|Dv|^2 =2|D^2vDv|^2 -2\Delta_\fz v \Delta v;
\end{equation}
when $n\ge3$, one has
\begin{align}\label{keyin2}
[|D^2v|^2-(\Delta v)^2]|Dv|^4&\ge \frac n{n-1}|{D^2vDv}|^2|Dv|^2-\frac{n-2}{n-1}(\Delta v)^2|Dv|^4 \\
&-\frac2{n-1}\Delta_\fz v \Delta v |Dv|^2+\frac{n-2}{n-1}(|D^2vDv|^2|Dv|^2- {(\Delta_\fz v)^2} )\quad in\ U.\nonumber
\end{align}
\end{lem}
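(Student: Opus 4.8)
The statement to prove is Lemma 2.1, the fundamental inequality for $\Delta v \, \Delta_\infty v$. Let me sketch a proof plan.

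\textbf{Approach.} The plan is to work pointwise at a fixed $x\in U$ where $Dv(x)\neq 0$ (the case $Dv(x)=0$ being trivial since both sides vanish or reduce to tautologies), and to exploit rotational invariance of all the quantities involved: $|D^2v|^2$, $(\Delta v)^2$, $|Dv|^2$, $|D^2vDv|^2$, $\Delta_\infty v = \langle D^2v\,Dv, Dv\rangle$ are all invariant under orthogonal changes of the coordinate frame. So I would rotate coordinates so that $Dv(x) = (|Dv(x)|, 0, \dots, 0)$, i.e. $v_{x_1} = |Dv|$ and $v_{x_j}=0$ for $j\geq 2$. In this frame, $\Delta_\infty v = |Dv|^2 v_{x_1x_1}$, $|D^2vDv|^2 = |Dv|^2\sum_{i}v_{x_1x_i}^2$, and $(\Delta v) = \sum_i v_{x_ix_i}$, while $|D^2v|^2 = \sum_{i,j}v_{x_ix_j}^2$.

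\textbf{The $n=2$ identity \eqref{keyin0}.} With the normalization above, write $a = v_{x_1x_1}$, $b = v_{x_1x_2}=v_{x_2x_1}$, $c = v_{x_2x_2}$, and $s = |Dv|^2$. Then $|D^2v|^2 = a^2 + 2b^2 + c^2$, $(\Delta v)^2 = (a+c)^2$, so $[|D^2v|^2 - (\Delta v)^2] = 2b^2 - 2ac$. Multiplying by $s$ gives $2s(b^2 - ac)$. On the other side, $|D^2vDv|^2 = s(a^2 + b^2)$ and $\Delta_\infty v\,\Delta v = s a (a+c)$, so $2|D^2vDv|^2 - 2\Delta_\infty v\,\Delta v = 2s(a^2+b^2) - 2sa(a+c) = 2s(b^2 - ac)$. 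The two sides agree, which proves \eqref{keyin0}. This is a short direct computation; I would just present it.

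\textbf{The $n\geq 3$ inequality \eqref{keyin2}.} This is the substantive part. After the same normalization, put $s = |Dv|^2 > 0$. The terms involving only $v_{x_1x_i}$ ($i\geq 1$) are controlled exactly; the issue is the ``bulk'' Hessian entries $v_{x_ix_j}$ with $i,j\geq 2$. I expect the inequality to follow from a Cauchy–Schwarz (or power-mean) estimate applied to the sub-block $(v_{x_ix_j})_{2\le i,j\le n}$, comparing $\sum_{i,j\geq 2}v_{x_ix_j}^2$ (which enters $|D^2v|^2$) against $(\sum_{i\geq 2}v_{x_ix_i})^2$ (the ``remaining'' part of $(\Delta v)^2$ after the $v_{x_1x_1}$ contribution is extracted): by Cauchy–Schwarz $(\sum_{i=2}^n v_{x_ix_i})^2 \leq (n-1)\sum_{i=2}^n v_{x_ix_i}^2 \leq (n-1)\sum_{i,j\geq 2}v_{x_ix_j}^2$. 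The constants $\frac{n}{n-1}$ and $\frac{n-2}{n-1}$ in the statement strongly suggest this is the mechanism. The plan is: expand both sides in the normalized coordinates, cancel the terms that match identically, and reduce the claim to an inequality purely among $v_{x_1x_1}$, the off-diagonal $v_{x_1x_i}$, and the lower block; then bound the lower-block diagonal sum by the full lower block via Cauchy–Schwarz, and collect terms. The \emph{main obstacle} I anticipate is bookkeeping: correctly splitting $\Delta v = v_{x_1x_1} + \sum_{i\ge 2}v_{x_ix_i}$ and $|D^2v|^2 = v_{x_1x_1}^2 + 2\sum_{i\ge 2}v_{x_1x_i}^2 + \sum_{i,j\ge2}v_{x_ix_j}^2$, tracking the factors of $s = |Dv|^2$ (note the inequality carries $|Dv|^4$ on the left and mixed powers $|Dv|^2, |Dv|^4$ on the right), and verifying that after applying Cauchy–Schwarz to the lower block the leftover quadratic form in $v_{x_1x_1}$ and the remaining linear combinations is manifestly nonnegative (likely a perfect square or a sum of squares). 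Since the paper cites \cite{kzz19,dpzz19,ss2022} for this lemma, I would follow that computation; the key insight to highlight is the reduction, via rotation, to the model frame plus a single Cauchy–Schwarz on the $(n-1)\times(n-1)$ Hessian sub-block orthogonal to $Dv$.
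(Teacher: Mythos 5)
The paper does not actually prove Lemma 2.1; it only cites \cite{kzz19,dpzz19,ss2022} for it, so there is no in-paper proof to compare against. Your proposed argument, however, is correct and gives a clean self-contained proof. Carrying out the bookkeeping you sketch confirms it: after rotating so that $Dv(x)=(|Dv|,0,\dots,0)$, set $s=|Dv|^2$, $a=v_{x_1x_1}$, $B=\sum_{i\ge 2}v_{x_1x_i}^2$, $C=\sum_{i,j\ge 2}v_{x_ix_j}^2$, $T=\sum_{i\ge 2}v_{x_ix_i}=\Delta v-a$. Then both sides of \eqref{keyin2} are multiples of $s^2$, and dividing through by $s^2$ one finds
\begin{equation*}
\text{(LHS)}-\text{(RHS)}=C+\frac{1}{n-1}\bigl[-a^2-(a+T)^2+2a(a+T)\bigr]=C-\frac{1}{n-1}T^2,
\end{equation*}
which is nonnegative precisely by the Cauchy--Schwarz step you identified, $\bigl(\sum_{i\ge 2}v_{x_ix_i}\bigr)^2\le (n-1)\sum_{i\ge 2}v_{x_ix_i}^2\le (n-1)\sum_{i,j\ge 2}v_{x_ix_j}^2$. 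In particular the ``leftover quadratic form'' you anticipated having to show is a perfect square collapses entirely: the deficit is exactly $C-\frac{1}{n-1}T^2$, so Cauchy--Schwarz alone finishes the proof with no extra sum-of-squares argument. Your $n=2$ computation is likewise correct and matches the identity \eqref{keyin0} exactly. One minor remark: for \eqref{keyin2} the degenerate case $Dv(x)=0$ does give $0\ge 0$, but it is worth stating explicitly rather than calling it ``trivial or tautological,'' since the term $-\frac{n-2}{n-1}(\Delta v)^2|Dv|^4$ might at first glance look like it could survive.
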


We also need the following algebraic structural identity for $\sigma_2$; 
see for example \cite[Lemma 2.4]{W2022}. 


\begin{lem}\label{lem2.2}
 Let   $\bz\in\rr$ and $\ez>0$.
For any   $v\in C^\fz(U)$,  we have
\begin{align}\label{id}
&\sigma_2(D[(|Dv|^2+\ez)^{{\bz/2}}Dv])\\
&\quad =\frac{1}{2}(|Dv|^2+\ez)^{\bz }[|D^2v|^2-(\bdz v)^2]
+\bz (|Dv|^2+\ez)^{{\bz-1}}[|D^2v Dv|^2-\bdz v\bdz_\fz v]\quad{\rm in}\ U.\nonumber
\end{align}
\end{lem}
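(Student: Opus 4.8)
The plan is to reduce \eqref{id} to an elementary trace identity. For any $n\times n$ matrix $M=(m_{ij})$ one has $({\rm tr}\,M)^2=\sum_{i,j}m_{ii}m_{jj}$ and ${\rm tr}(M^2)=\sum_{i,j}m_{ij}m_{ji}$, so the diagonal terms cancel and
$$\sigma_2(M)=-\sum_{1\le i<j\le n}[m_{ii}m_{jj}-m_{ij}m_{ji}]=-\frac{1}{2}\big[({\rm tr}\,M)^2-{\rm tr}(M^2)\big].$$
Hence it suffices to compute the two traces when $M:=D[(|Dv|^2+\ez)^{\bz/2}Dv]$. Writing $\rho:=|Dv|^2+\ez$ and $W_i:=\rho^{\bz/2}v_i$, so that $m_{ij}=\partial_jW_i$, and using $\partial_j\rho=2\sum_k v_kv_{kj}=2(D^2vDv)_j$, the product rule gives the pointwise formula
$$m_{ij}=\bz\,\rho^{\bz/2-1}\,v_i\,(D^2vDv)_j+\rho^{\bz/2}\,v_{ij}\qquad\text{in }U.$$

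Taking $i=j$, summing, and recalling $\Delta_\fz v=\sum_i v_i(D^2vDv)_i=D^2vDv\cdot Dv$, we get ${\rm tr}\,M=\bz\rho^{\bz/2-1}\Delta_\fz v+\rho^{\bz/2}\Delta v$, whence
$$({\rm tr}\,M)^2=\bz^2\rho^{\bz-2}(\Delta_\fz v)^2+2\bz\rho^{\bz-1}\Delta v\,\Delta_\fz v+\rho^{\bz}(\Delta v)^2.$$
For ${\rm tr}(M^2)=\sum_{i,j}m_{ij}m_{ji}$ I would expand the product of the two displayed expressions for $m_{ij}$ and $m_{ji}$ into four sums; using the symmetry $v_{ij}=v_{ji}$ of $D^2v$ these collapse to $\bz^2\rho^{\bz-2}(\Delta_\fz v)^2$ (from the first $\times$ first term, after carrying out $\sum_i v_i(D^2vDv)_i$ twice), two equal cross terms each equal to $\bz\rho^{\bz-1}|D^2vDv|^2$ (after summing $\sum_i v_iv_{ij}=(D^2vDv)_j$), and $\rho^{\bz}\sum_{i,j}v_{ij}v_{ji}=\rho^{\bz}|D^2v|^2$ (from the last $\times$ last term). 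Thus ${\rm tr}(M^2)=\bz^2\rho^{\bz-2}(\Delta_\fz v)^2+2\bz\rho^{\bz-1}|D^2vDv|^2+\rho^{\bz}|D^2v|^2$, and subtracting,
$$({\rm tr}\,M)^2-{\rm tr}(M^2)=\rho^{\bz}\big[(\Delta v)^2-|D^2v|^2\big]+2\bz\rho^{\bz-1}\big[\Delta v\,\Delta_\fz v-|D^2vDv|^2\big].$$
Multiplying by $-\frac{1}{2}$ and substituting $\rho=|Dv|^2+\ez$ gives exactly \eqref{id}.

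There is no genuine obstacle here: the argument is purely algebraic and, since $v\in C^\fz(U)$, valid pointwise in $U$. The only points demanding care are keeping the three exponents $\bz/2-1$, $\bz-1$, $\bz-2$ straight when squaring ${\rm tr}\,M$, and exploiting the symmetry of $D^2v$ so that the two cross terms in ${\rm tr}(M^2)$ coincide and assemble into $2\bz\rho^{\bz-1}|D^2vDv|^2$. An alternative, more pedestrian route would be to expand $\sigma_2(M)$ directly as a sum of $2\times2$ principal minors of $(m_{ij})$ and simplify each minor; the trace identity above merely organizes that computation.
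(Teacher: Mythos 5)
Your proof is correct, and the computation checks out line by line: the trace identity $\sigma_2(M)=-\tfrac12[(\mathrm{tr}\,M)^2-\mathrm{tr}(M^2)]$ reduces everything to a product-rule expansion of $m_{ij}=\partial_j(\rho^{\bz/2}v_i)$, and the symmetry of $D^2v$ is used exactly where it must be in the two cross terms of $\mathrm{tr}(M^2)$. The paper itself does not prove Lemma 2.2 but simply cites \cite[Lemma 2.4]{W2022}, so there is no internal proof to compare against; your trace-based derivation is the natural direct computation and is essentially the standard route for such $\sigma_2$ identities.
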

 
The following identity of $\sigma_2$ follows from integration by parts; see \cite[Lemma 2.5]{W2022}.
\begin{lem}\label{lem2.3}
Let   $\bz\in\rr$ and $\ez>0$.
For any   $v\in C^\fz(U)$, any $\psi\in C_c^\fz(U) $ and any vector $\vec{c}\in\rr^n$, we have

\begin{align}\label{id2.4}
&\int_U\sigma_2(D[(|Dv|^2+\ez)^{{\bz/2}}Dv])\psi\,dx\\
&\quad =\int_U\sum\limits_{1\leq i<j\leq n}{\color{red}[}(|Dv|^2+\ez)^{\bz/2 }v_{x_i}-c_i][(|Dv|^2+\ez)^{\bz/2 }v_{x_j}]_{x_j}\psi_{x_i}\,dx\nonumber\\
&\quad\quad -\int_U\sum\limits_{1\leq i<j\leq n}{\color{red}[}(|Dv|^2+\ez)^{\bz/2 }v_{x_i}-c_i][(|Dv|^2+\ez)^{\bz/2 }v_{x_j}]_{x_i}\psi_{x_j}\,dx.\nonumber
\end{align}

%
%
%

\end{lem}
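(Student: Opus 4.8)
The final statement to be proved is Lemma \ref{lem2.3}, the integration-by-parts identity for $\sigma_2(D[(|Dv|^2+\ez)^{\bz/2}Dv])$ tested against $\psi\in C_c^\fz(U)$, with the flexibility of an arbitrary constant vector $\vec c$ inserted.

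\medskip

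\textbf{The plan.} I would start from the pointwise algebraic identity in Lemma \ref{lem2.2}, but it is cleaner to work directly from the definition of $\sigma_2$ applied to the matrix field $W := D[(|Dv|^2+\ez)^{\bz/2}Dv]$, whose entries are $w_{ij} = [(|Dv|^2+\ez)^{\bz/2} v_{x_i}]_{x_j}$. By definition,
\begin{equation*}
\sigma_2(W) = -\sum_{1\le i<j\le n}\bigl[w_{ii}w_{jj} - w_{ij}w_{ji}\bigr].
\end{equation*}
Abbreviate $V_i := (|Dv|^2+\ez)^{\bz/2} v_{x_i}$, so $w_{ij} = (V_i)_{x_j}$ and $W$ is the Jacobian matrix of the vector field $V=(V_1,\dots,V_n)$. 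The key structural fact is that each summand $w_{ii}w_{jj}-w_{ij}w_{ji} = (V_i)_{x_i}(V_j)_{x_j}-(V_i)_{x_j}(V_j)_{x_i}$ is a \emph{null Lagrangian} (a $2\times 2$ Jacobian determinant of $(V_i,V_j)$ in the variables $(x_i,x_j)$), hence is a divergence: one checks
\begin{equation*}
(V_i)_{x_i}(V_j)_{x_j}-(V_i)_{x_j}(V_j)_{x_i}
= \bigl[V_i (V_j)_{x_j}\bigr]_{x_i} - \bigl[V_i (V_j)_{x_i}\bigr]_{x_j},
\end{equation*}
since the cross terms $V_i(V_j)_{x_jx_i}$ cancel. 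This is where $v\in C^\fz$ is used (equality of mixed partials). Because the right-hand side is a pure divergence, I may add any constant $c_i$ to the leading factor $V_i$ without changing the value: $\bigl[c_i(V_j)_{x_j}\bigr]_{x_i}-\bigl[c_i(V_j)_{x_i}\bigr]_{x_j} = c_i(V_j)_{x_jx_i}-c_i(V_j)_{x_ix_j}=0$. This is precisely why the arbitrary vector $\vec c$ appears in the statement.

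\medskip

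\textbf{Carrying it out.} Multiplying by $\psi$, summing over $i<j$, integrating over $U$, and integrating by parts (no boundary terms since $\psi\in C_c^\fz(U)$), I get
\begin{align*}
\int_U \sigma_2(W)\psi\,dx
&= -\sum_{1\le i<j\le n}\int_U \Bigl\{\bigl[(V_i-c_i)(V_j)_{x_j}\bigr]_{x_i} - \bigl[(V_i-c_i)(V_j)_{x_i}\bigr]_{x_j}\Bigr\}\psi\,dx\\
&= \sum_{1\le i<j\le n}\int_U (V_i-c_i)(V_j)_{x_j}\,\psi_{x_i}\,dx
- \sum_{1\le i<j\le n}\int_U (V_i-c_i)(V_j)_{x_i}\,\psi_{x_j}\,dx,
\end{align*}
which is exactly \eqref{id2.4} after unpacking $V_i = (|Dv|^2+\ez)^{\bz/2}v_{x_i}$ and $(V_j)_{x_k} = [(|Dv|^2+\ez)^{\bz/2}v_{x_j}]_{x_k}$. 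The overall sign works out because $\sigma_2$ carries the extra minus relative to the sum of $2\times 2$ minors.

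\medskip

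\textbf{Main obstacle.} There is no deep obstacle here — this is a bookkeeping lemma — but the one place to be careful is the combinatorics of the indices: the determinant $w_{ii}w_{jj}-w_{ij}w_{ji}$ is symmetric in $\{i,j\}$, whereas the divergence representation above is written with $i$ and $j$ playing asymmetric roles (the factor $V_i$ versus the derivative of $V_j$). One must verify that summing the asymmetric representation over the ordered pairs $i<j$ reproduces the symmetric quantity correctly, i.e. that no factor of $2$ or sign is lost; equivalently, one can symmetrize and check that the antisymmetric part integrates to zero against $\psi$. A secondary minor point is justifying that all integrands are integrable on the support of $\psi$ — immediate since $v\in C^\fz(U)$ and $(|Dv|^2+\ez)^{\bz/2}$ is smooth thanks to $\ez>0$, so no degeneracy issue arises even for negative $\bz$. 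With these checks in place the identity follows directly.
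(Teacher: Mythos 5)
Your proof is correct and is, in substance, the same argument the paper itself carries out for the analogous unnumbered lemma that follows (the paper defers the proof of Lemma \ref{lem2.3} to \cite[Lemma 2.5]{W2022}): write each $2\times 2$ minor $w_{ii}w_{jj}-w_{ij}w_{ji}$ as a pointwise divergence, use equality of mixed partials to insert the free constant vector $\vec c$, and integrate by parts once against $\psi\in C_c^\infty(U)$. One small remark: the ``obstacle'' you flag about the asymmetric divergence representation is a non-issue, since the pointwise identity you exhibit holds for each fixed ordered pair $(i,j)$ with $i<j$, and summing it over $i<j$ reproduces $\sum_{i<j}(w_{ii}w_{jj}-w_{ij}w_{ji})$ directly, with no symmetrization and no risk of a lost factor or sign.
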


\begin{lem}{\color{red}Let   $\bz\in\rr$.}
For any  $v\in W^{1,2}(U)$ with  $|Dv|^\bz Dv\in W^{1,2}(U)$, any $\psi\in C_c^\fz(U) $ and any vector $\vec{c}\in\rr^n$, we have
{\color{red}
\begin{align}\label{id2.4-1}
 \int_U
\sigma_2(D[  |Dv| ^{\bz }Dv])\, \psi\,dx=&\int_U\sum\limits_{1\leq i<j\leq n} {\color{red}[} |Dv| ^{\bz }v_{x_i}-c_i]
[ |Dv| ^{\bz  }v_{x_j}]_{x_j}\psi_{x_i}\,dx \\ 
 & -\int_U\sum\limits_{1\leq i<j\leq n}  [ |Dv| ^{\bz }v_{x_i}-c_i]
[  |Dv| ^{\bz }v_{x_j}]_{x_i}\psi_{x_j}\,dx.\nonumber
\end{align}

%
%
%

\begin{proof}
For $\ez>0$, let $$F^\ez= (F^\ez_1,  F^\ez_2,...,F^\ez_n)=(|Dv|^\bz v_{x_1}-c_1,\cdots,|Dv|^\bz v_{x_n}-c_n )\ast\eta_\ez
 \in C^\fz_\loc(\Omega),$$  where $\eta_\ez$ is the standard smooth molliffier, in other words, $\eta_\ez(x)=\ez^{-n}\eta(x/\ez)$,
and $0\le \eta\in C_0^\fz(B(0,1))$ and $\int_\rn\eta(x)\,dx=1$. 
Thanks to the assumption $|Dv|^\beta Dv\in W^{1,2}_\loc(\Omega)$,  
one has $$\mbox{$F^\ez \to (|Dv|^\bz v_{x_1}-c_1,\cdots,|Dv|^\bz v_{x_n}-c_n )$ in $W^{1,2}_\loc(\Omega)$ as $\ez \to 0$.}$$  Note that $(F^\ez_k)_{x_ix_j}=(F^\ez_k)_{x_jx_i}$ for all possible $i,j,k.$ Therefore, for any $\psi\in C^\fz_c(\Omega )$, via 
 integration by parts  
one  obtains 
\begin{align}
\int_\Omega-\sigma_2\big(D\big[|Dv|^\beta Dv\big]\big)\psi\,dx&=
\lim_{\ez\to 0}\int_\Omega-\sigma_2(DF^\ez)\psi\,dx\nonumber\\
&=-\lim_{\ez\to0}\int_\Omega \sum\limits_{1\leq i<j\leq n}  \big[( F^\ez_i)_{x_i}
(F^\ez_j)_{x_j}  -  (F^\ez_i)_{x_j}
(F^\ez_j)_{x_i}\big]\psi \,dx \nonumber\\
&=\lim_{\ez\to0}\int_\Omega \sum\limits_{1\leq i<j\leq n}  \big[
 F^\ez_i(F^\ez_j)_{x_j}\psi_{x_i}+F^\ez_i
(F^\ez_j)_{x_jx_i}\psi  
-F^\ez_i
( F^\ez_j)_{x_i}\psi_{x_j}-F^\ez_i
( F^\ez_j)_{x_ix_j} \psi \big]\,dx \nonumber\\
&=\lim_{\ez\to0}\int_\Omega  \sum\limits_{1\leq i<j\leq n} \big[ F^\ez_i
(F^\ez_j)_{x_j}\psi_{x_i}  -
F^\ez_i
( F^\ez_j)_{x_i}\psi_{x_j}\big]\,dx \nonumber\\
&=\int_\Omega  \sum\limits_{1\leq i<j\leq n} \big[( |Dv | ^{{\bz }}v _{x_i}-c_i)
( |Dv | ^{{\bz }}v _{x_j} )_{x_j}\psi_{x_i}  -  ( |Dv | ^{{\bz }}v _{x_i}-c_i )
( |Dv | ^{{\bz }}v _{x_j})_{x_i}\psi_{x_j}\big]\,dx.\nonumber
\end{align}
We complete this proof.
\end{proof}

}

\end{lem}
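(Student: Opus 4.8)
The statement to be proved is the identity \eqref{id2.4-1}, namely the integration-by-parts formula for $\sigma_2(D[|Dv|^\bz Dv])$ against a test function $\psi$, assuming only that $v\in W^{1,2}(U)$ with $|Dv|^\bz Dv\in W^{1,2}(U)$. The strategy is a standard mollification argument: one cannot integrate by parts directly because $|Dv|^\bz Dv$ is merely $W^{1,2}$ (so its second derivatives do not exist classically and $\sigma_2$ of its gradient a priori only makes sense after the antisymmetrization), but one can pass to smooth approximants where the computation of Lemma \ref{lem2.3} (or rather the algebraic manipulation behind it) is legitimate, and then take limits.

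The plan is as follows. First I would set $w=(w_1,\dots,w_n):=|Dv|^\bz Dv - \vec c \in W^{1,2}(U;\rn)$ and mollify: $w^\ez := w * \eta_\ez \in C^\infty_{\loc}$, which converges to $w$ in $W^{1,2}_{\loc}(U)$ as $\ez\to 0$. For the smooth vector field $w^\ez$, the quantity $\sum_{i<j}[(w^\ez_i)_{x_i}(w^\ez_j)_{x_j} - (w^\ez_i)_{x_j}(w^\ez_j)_{x_i}]$ equals $\sigma_2$ of $Dw^\ez = D[|Dv|^\bz Dv]*\eta_\ez$ (note $\vec c$ is constant so it drops out of the derivative), so $\int_U \sigma_2(Dw^\ez)\psi\,dx \to \int_U \sigma_2(D[|Dv|^\bz Dv])\psi\,dx$; here one must check that $\sigma_2(Dw^\ez)\to\sigma_2(D[|Dv|^\bz Dv])$ in $L^1_{\loc}$, which follows because $\sigma_2$ is a quadratic form in the first-order derivatives and $Dw^\ez\to D[|Dv|^\bz Dv]$ in $L^2_{\loc}$, so the products converge in $L^1_{\loc}$. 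Second, for each fixed $\ez$, integrate by parts twice using $(w^\ez_k)_{x_ix_j}=(w^\ez_k)_{x_jx_i}$ (valid since $w^\ez$ is smooth): write $(w^\ez_i)_{x_i}(w^\ez_j)_{x_j}\psi = [w^\ez_i (w^\ez_j)_{x_j}\psi]_{x_i} - w^\ez_i(w^\ez_j)_{x_jx_i}\psi - w^\ez_i(w^\ez_j)_{x_j}\psi_{x_i}$ and similarly for the other term; the second-derivative terms cancel by symmetry, the full-divergence terms integrate to zero since $\psi$ has compact support, and one is left with $\int_U \sigma_2(Dw^\ez)\psi\,dx = \int_U\sum_{i<j}[w^\ez_i(w^\ez_j)_{x_j}\psi_{x_i} - w^\ez_i(w^\ez_j)_{x_i}\psi_{x_j}]\,dx$ — I should flip a sign carefully, but this is exactly the bookkeeping displayed in the excerpt's proof. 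Third, pass to the limit on the right-hand side: since $w^\ez\to w$ in $L^2_{\loc}$ and $Dw^\ez\to Dw$ in $L^2_{\loc}$, each product $w^\ez_i(w^\ez_j)_{x_j}$ converges in $L^1_{\loc}$, and multiplying by the bounded function $\psi_{x_i}$ (supported compactly) the integral converges; replacing $w_k$ by $|Dv|^\bz v_{x_k}-c_k$ gives precisely \eqref{id2.4-1}.

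The only genuinely delicate point — and the one I would be most careful about — is the limit passage for the quadratic terms: a product of two sequences converging only weakly would not converge, so it is essential that the mollified gradients converge \emph{strongly} in $L^2_{\loc}$, which is exactly the content of $|Dv|^\bz Dv\in W^{1,2}(U)$ together with standard properties of mollifiers. Strong $L^2_{\loc}$ convergence of $w^\ez$ and $Dw^\ez$ makes every bilinear expression ($\sigma_2(Dw^\ez)$ on the left, $w^\ez_i(w^\ez_j)_{x_j}$ on the right) converge in $L^1_{\loc}$, and since all test functions $\psi$, $\psi_{x_i}$ are bounded with compact support, the dominated convergence is immediate. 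A minor secondary point is that the identity should be stated and used on compactly contained subdomains first (so that mollification is defined and the test function's support lies inside the region of smoothness), but since $\psi\in C_c^\infty(U)$ this is automatic by taking $\ez$ smaller than $\dist(\operatorname{supp}\psi,\partial U)$. No convexity or ellipticity structure is needed; the result is purely the algebra of the divergence form of $\sigma_2$ plus a routine approximation, so there is no substantive obstacle beyond this strong-convergence observation.
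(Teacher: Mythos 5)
Your proposal is correct and follows essentially the same route as the paper: mollify the vector field $|Dv|^\bz Dv - \vec c$, integrate by parts for the smooth approximants using the symmetry of second derivatives, and pass to the limit using strong $W^{1,2}_{\loc}$ convergence of the mollifications. You in fact spell out more carefully than the paper why both sides converge (quadratic structure of $\sigma_2$ and of the boundary terms plus strong $L^2$ convergence), which is the only genuinely delicate point, and you handle it correctly.
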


We also need the following Gehring's lemma; see \cite{MPS2000}.
\begin{lem}\label{lem4} 
Let $U\subset\rn$ be  a subdomain and $1<r<s$. 
{\color{red}Suppose} that 
$ 0\le F \in L^r(U)$ and $0\le  G \in L^s(U)$ satisfy   
$$
\dashint_{B}(F(x))^r d x \leq K \left(\dashint_{2B} F(x) d x\right)^r+ K \dashint_{2B}(G(x))^r d x \quad\forall B\Subset 2B\Subset U
$$
for some constant $K\ge1$. 
Then there exists constants $\dz \in(0, s-r)$ and $C\ge1$,
 depending on depend only on $K, r, s$ and $n$, such that
$$
\left(\dashint_{B}(F(x))^{r+\dz} d x\right)^{1 / (r+\dz)} \leq C\left(\dashint_{2B}(F(x))^r d x\right)^{1 / r}+C\left(\dashint_{2B}(G(x))^{r+\dz} d x\right)^{1 / (r+\dz)} \quad\forall B\Subset 2B\Subset U. 
$$   

\end{lem}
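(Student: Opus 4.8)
The plan is to obtain the higher integrability by the classical self‑improvement argument for weak reverse H\"older inequalities, in the spirit of Gehring and Giaquinta--Modica. It suffices to prove the conclusion on a fixed ball $B_0$ with $2B_0\Subset U$, with $\dz$ and $C$ depending only on $n,r,s,K$; the formulation for an arbitrary $B$ then follows by a routine covering together with the a priori bound $F\in L^r(U)$. By homogeneity I normalize so that $\dashint_{2B_0}F^r\,dx+\dashint_{2B_0}G^r\,dx=1$ (the case where both averages vanish is trivial), and to keep every integral finite during the argument I work with the truncations $F_m:=\min\{F,m\}$, passing to the limit $m\to\fz$ by monotone convergence only at the end; the truncated functions satisfy the same hypothesis with the same $K$.

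First I would fix a large $\lambda_0=\lambda_0(n,K,r)$, take $\lambda\ge\lambda_0$, and perform a Calder\'on--Zygmund / Whitney‑type stopping‑time decomposition of $2B_0$ relative to $F^r$ at level $\lambda^r$. This yields a countable family of pairwise disjoint cubes $\{Q_i\}\subset 2B_0$ with $\lambda^r\le\dashint_{Q_i}F^r\,dx\le C(n)\lambda^r$ for each $i$, and $F\le\lambda$ a.e.\ on $2B_0\setminus\bigcup_i Q_i$; for $\lambda_0$ large each $Q_i$ is small enough that the relevant enlargement $\hat Q_i$ still lies in $2B_0$, so the hypothesis applies and bounds $\dashint_{Q_i}F^r\,dx$ by $K(\dashint_{\hat Q_i}F\,dx)^r+K\dashint_{\hat Q_i}G^r\,dx$. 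Splitting $F=F\chi_{\{F>\eta\lambda\}}+F\chi_{\{F\le\eta\lambda\}}$ for a small $\eta=\eta(n,K,r)$, the second piece contributes at most $\eta\lambda$ to the average, so $(\eta\lambda)^r$ can be absorbed into the lower bound $\lambda^r\le\dashint_{Q_i}F^r$ once $\eta$ is chosen small; summing over $i$, using disjointness and the bounded overlap of the $\hat Q_i$, I expect to reach the good‑$\lambda$ inequality
$$\int_{\{F>\lambda\}}F^r\,dx\le C_0\,\lambda^{r-1}\int_{\{F>\eta\lambda\}}F\,dx+C_0\int_{\{G>\eta\lambda\}}G^r\,dx\qquad(\lambda\ge\lambda_0),$$
with $C_0=C_0(n,K,r)$.

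Next I would multiply this inequality by $\lambda^{\dz-1}$ and integrate in $\lambda$ over $(\lambda_0,\fz)$. By Fubini's theorem the left‑hand side equals $\dz^{-1}\int_{\{F>\lambda_0\}}(F^\dz-\lambda_0^\dz)F^r\,dx$, which — using the normalization and $F^{r+\dz}\le\lambda_0^\dz F^r$ on $\{F\le\lambda_0\}$ — is at least $\dz^{-1}\int_{2B_0}F^{r+\dz}\,dx-C\dz^{-1}$; by the analogous computation the two terms on the right are at most $\tfrac{C_0\,\eta^{1-r-\dz}}{r+\dz-1}\int_{2B_0}F^{r+\dz}\,dx$ and $\tfrac{C_0\,\eta^{1-r-\dz}}{r+\dz-1}\int_{2B_0}G^{r+\dz}\,dx$ respectively, the latter being finite precisely because $r+\dz\le s$ and $G\in L^s(U)$. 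Multiplying through by $\dz$, the coefficient $\tfrac{C_0\,\dz\,\eta^{1-r-\dz}}{r+\dz-1}$ of $\int F^{r+\dz}$ on the right tends to $0$ as $\dz\to0^+$ (here $r>1$ keeps the denominator away from $0$ and $\eta$ is already fixed), so I may choose $\dz=\dz(n,K,r,s)\in(0,s-r)$ making this coefficient at most $\tfrac12$ and absorb the $\int F^{r+\dz}$ term into the left. Letting $m\to\fz$ and undoing the normalization gives $\dashint_{B_0}F^{r+\dz}\,dx\le C\big(\dashint_{2B_0}F^r\,dx\big)^{(r+\dz)/r}+C\dashint_{2B_0}G^{r+\dz}\,dx$, and the claim follows after taking $(r+\dz)$‑th roots and covering a general ball.

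The step I expect to be most delicate is this final absorption together with the precise derivation of the good‑$\lambda$ inequality: one must verify that $\lambda_0$, $\eta$ and $\dz$ can be chosen depending only on $n,r,s,K$, that the truncation $F_m$ does not spoil this uniformity, and that the summation over the decomposition cubes — with the correct handling of their enlargements and of bounded overlap — really produces the stated inequality. This is also the only place where the hypotheses $F\in L^r$ and $G\in L^s$ with $s>r$ are used quantitatively, through the requirement $\dz<s-r$.
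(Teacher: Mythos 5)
The paper does not prove this lemma: it is stated as a known result and a reference is given to Maugeri--Palagachev--Softova \cite{MPS2000}. So there is no ``paper's own proof'' to compare against; what you have written is a from-scratch proof following the classical Gehring/Giaquinta--Modica scheme (stopping-time decomposition $\Rightarrow$ good-$\lambda$ inequality $\Rightarrow$ integration in $\lambda$ with absorption). That is indeed the standard route, and your plan has the right skeleton: the normalization, the truncations $F_m$, the splitting of $F$ at level $\eta\lambda$, the Fubini computation with $\lambda^{\delta-1}$, and the observation that the absorption constant tends to $0$ as $\delta\to 0^+$ because $r>1$ are all correct and exactly as in the literature.

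There is, however, a genuine gap in the step you yourself flag as delicate, and it is more than a detail. Your good-$\lambda$ inequality is stated without domains of integration; to close the absorption you would need it in the form
$\int_{\{F>\lambda\}\cap 2B_0}F^r\le C_0\lambda^{r-1}\int_{\{F>\eta\lambda\}\cap 2B_0}F+C_0\int_{\{G>\eta\lambda\}\cap 2B_0}G^r$,
i.e.\ with the \emph{same} ball $2B_0$ on both sides. But your derivation produces stopping cubes $Q_i\subset 2B_0$ whose enlargements $\hat Q_i$ (needed to invoke the reverse H\"older hypothesis) must also be contained in the ball. Your claim that ``for $\lambda_0$ large each $Q_i$ is small enough that $\hat Q_i$ still lies in $2B_0$'' is not correct: a large value of $\lambda_0$ makes the stopping cubes small, but it does not prevent them from sitting adjacent to $\partial(2B_0)$, in which case any nontrivial dilation pokes out. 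Consequently you only get a good-$\lambda$ inequality with a smaller set on the left than on the right (say $\rho B_0$ versus $\rho' B_0$ with $\rho<\rho'$), and then the Fubini computation yields $\int_{\rho B_0}F^{r+\delta}\le\epsilon\int_{\rho' B_0}F^{r+\delta}+\cdots$, which does not absorb. Fixing this is the real content of the local Gehring lemma: either one modifies $F$ by a weight vanishing at $\partial(2B_0)$ so that the good-$\lambda$ inequality becomes global in $2B_0$ (Giaquinta--Modica's device), or one lets $\lambda_0$ depend on $\rho'-\rho$ and closes with the standard iteration lemma in the radius. Without one of these ingredients the proof does not go through, so you should add it explicitly; once it is in place, the rest of your outline is sound and the dependence of $\delta$, $C$ on $n,r,s,K$ only is exactly as you describe.

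A small additional point: the reverse H\"older hypothesis is for \emph{concentric balls} $B\Subset 2B$, while your decomposition produces cubes. You need to pass from the cube $Q_i$ to an inscribed ball and from $\hat Q_i$ to a circumscribed ball, which changes the required margin by a dimensional factor (roughly $\sqrt n$); this is harmless but should be stated, since it affects how much room near $\partial(2B_0)$ must be sacrificed.
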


\section {A key $\sigma_2$-estimate for approximation equations}

In this section,  
we establish the following key  pointwise $\sigma_2$-estimate  for smooth solutions to 
 approximation equations. 

\begin{lem}\label{lem3-1}
Let  $B=B(z,R)\subset\rn$  with $n\ge2$,
and  $p(\cdot)\in C^\fz(B)$ with $1<p(x)<\fz$ for all  $x\in B$ and $g\in C^\fz(B)$.   
For  any $\ez\in(0,1)$, let  $v\in C^2(  B)$ be any  solution to
\begin{align} \label{eq3.1}
-\Delta v-(p(\cdot)-2) \frac{\Delta_\fz v}{\left|D v\right|^{2}+\ez}+ v= g\quad {\rm in}\ B.
\end{align}

Given any $t_\pm\in(1,\fz)$ with  $ t_-\le p(x)\le t_+$ for all $x\in B$, 
and  any $\bz\in(\bz_\star(n,t_+),\fz) $,  we have  
\begin{align}\label{neq3.1}
|D[(|Dv|^2+\ez)^{\bz/2}Dv]|^2\le C_\star (t_\pm,n,\beta) \sigma_2(D[(|Dv|^2+\ez)^{{\bz/2}}Dv])+ \wz C_\star (t_\pm,n,\beta)(|Dv|^2+\ez)^{\bz}(g-v)^2 \quad \mbox{in $B$} 
\end{align}
and moreover,   for any $\phi\in C^\fz_c(B)$, 
\begin{align}\label{eq4.2}
&\int_{  B}|D[(|Dv|^2+\ez)^{\frac{\bz}2}Dv]|^2\phi^2 \,dx
\\
&\quad \le  C_\sharp( t_\pm,n,\bz)\left[ \int_{   B }|(|Dv|^2+\ez)^{\beta/2}Dv-\vec{c}|^2|D\phi|^2\,dx+ \int_{ B }(|Dv|^2+\ez)^{\bz}(g-v)^2\phi^2\,dx\right]   . \nonumber
\end{align}
Here constants $C_\star(t_\pm;n,\beta)\ge1  $, 
$\wz C_\star(t_\pm;n,\beta)\ge1  $ and
 $C_\sharp(t_\pm,n,\beta) \ge1 $ are  fixed in this paper. 
  \end{lem}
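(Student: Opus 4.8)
The plan is to establish the pointwise inequality \eqref{neq3.1} first and then derive the integral form \eqref{eq4.2} from it by a standard Caccioppoli-type argument. For the pointwise estimate, I would work at a fixed point $x_0\in B$. If $Dv(x_0)=0$ the left-hand side of \eqref{neq3.1} can be estimated directly (when $\bz>0$ it vanishes, and the general case needs a short separate check), so the main work is at points where $Dv(x_0)\neq 0$. There, $v$ is $C^2$ near $x_0$ and, because the operator in \eqref{eq3.1} is uniformly elliptic away from the zero set of $Dv$, elliptic regularity upgrades $v$ to $C^{2,\az}_{\rm loc}$ (indeed $C^\infty$ if one wants, since $p,g$ are smooth), so all the second-order identities of Section 2 apply classically at $x_0$. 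The strategy is: use the PDE \eqref{eq3.1} to replace $\Delta v$ by $(p-2)\Delta_\fz v/(|Dv|^2+\ez) + (g-v)$ inside the algebraic identity for $\sigma_2$ from Lemma \ref{lem2.2}, and then invoke the fundamental inequality of Lemma \ref{lem2.1} (the $n=2$ identity \eqref{keyin0} or the $n\ge3$ inequality \eqref{keyin2}) to control the cross term $|D^2vDv|^2 - \Delta v\,\Delta_\fz v$ and $[|D^2v|^2-(\Delta v)^2]$ in terms of $\sigma_2(D[(|Dv|^2+\ez)^{\bz/2}Dv])$ plus an error proportional to $(|Dv|^2+\ez)^\bz(g-v)^2$.

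More concretely, write $w=(|Dv|^2+\ez)^{\bz/2}Dv$. By Lemma \ref{lem2.2},
\[
\sigma_2(Dw)=\tfrac12(|Dv|^2+\ez)^\bz[|D^2v|^2-(\Delta v)^2]+\bz(|Dv|^2+\ez)^{\bz-1}[|D^2vDv|^2-\Delta v\,\Delta_\fz v],
\]
while a direct expansion gives $|Dw|^2$ as $(|Dv|^2+\ez)^\bz|D^2v|^2$ plus lower-order (in derivative count) terms of the form $\bz(|Dv|^2+\ez)^{\bz-1}|D^2vDv|^2$ and $\bz^2(|Dv|^2+\ez)^{\bz-2}|D^2vDv|^2$. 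The task is then purely algebraic: I must show there are constants $C_\star,\wz C_\star$ depending only on $t_\pm,n,\bz$ such that
\[
|Dw|^2\le C_\star\,\sigma_2(Dw)+\wz C_\star(|Dv|^2+\ez)^\bz(g-v)^2
\]
holds as a pointwise inequality among these quadratic forms in the entries of $D^2v$, after substituting $\Delta v=(p-2)\Delta_\fz v/(|Dv|^2+\ez)+(g-v)$ via \eqref{eq3.1}. Normalizing $|Dv|^2+\ez=1$ and choosing coordinates so that $Dv=|Dv|e_1$, this reduces to a finite-dimensional quadratic-form inequality parametrized by $p\in[t_-,t_+]$ and $\bz$. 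The condition $\bz>\bz_\star(n,t_+)$ is exactly what makes the relevant quadratic form in $D^2v$ positive-definite modulo the $(g-v)^2$ error — this is where \eqref{keyin2} and the explicit threshold $\bz_\star(n,t)=-1+\frac{n-2}{2(n-1)}(t-1)$ enter, following the computations of \cite{dpzz19,ss2022}.

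For the integral estimate \eqref{eq4.2}, I would multiply \eqref{neq3.1} by $\phi^2$, integrate over $B$, and handle the term $\int_B\sigma_2(Dw)\phi^2\,dx$ using the divergence structure of $\sigma_2$ via Lemma \ref{lem2.3} with $\psi=\phi^2$ and an arbitrary constant vector $\vec c$: integration by parts moves one derivative off $w$ and onto $\phi^2$, producing $\int_B(w-\vec c)\cdot(\text{first derivatives of }w)\,D(\phi^2)\,dx$; then Cauchy–Schwarz and Young's inequality with a small parameter absorb the resulting $\int_B|Dw|^2\phi^2\,dx$ into the left-hand side, leaving $\int_B|w-\vec c|^2|D\phi|^2\,dx$ on the right. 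Combined with the $\int_B(|Dv|^2+\ez)^\bz(g-v)^2\phi^2\,dx$ term already present, this yields \eqref{eq4.2} with $C_\sharp$ depending only on $t_\pm,n,\bz$. The main obstacle is the pointwise algebraic inequality: getting the dependence of $C_\star,\wz C_\star$ only on $t_\pm,n,\bz$ (uniform in $\ez$ and in the point $x_0$) and correctly tracking the role of the threshold $\bz_\star(n,t_+)$ requires a careful case analysis on the sign of $\bz$ and on the geometry of $D^2v$ relative to $Dv$; the rest is routine, modulo the need to justify at the outset that $v$ is regular enough (near points where $Dv\neq0$) for the classical identities of Section 2 to be used, and a short limiting argument to cover the degenerate set $\{Dv=0\}$.
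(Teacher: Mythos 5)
Your proposal follows essentially the same strategy as the paper: establish the pointwise inequality first by combining the $\sigma_2$ identity of Lemma \ref{lem2.2}, the fundamental inequality of Lemma \ref{lem2.1}, and the substitution of $\Delta v$ from the PDE, with the threshold $\bz_\star(n,t_+)$ arising from positivity of the resulting quadratic form in $D^2v$; then derive the integral bound via Lemma \ref{lem2.3} and Young's inequality. Two small remarks: first, at a point with $Dv(x_0)=0$ the left-hand side of \eqref{neq3.1} is $\ez^\bz|D^2v(x_0)|^2$, which does \emph{not} vanish even when $\bz>0$ (the paper handles it by substituting $-\Delta v=g-v$ there and writing $\ez^\bz|D^2v|^2=2\sigma_2+\ez^\bz(g-v)^2$), so your parenthetical ``when $\bz>0$ it vanishes'' is incorrect, though your acknowledgment of a separate check covers the gap; second, the elliptic-regularity upgrade you mention is superfluous, since $v\in C^2(B)$ already suffices for all the pointwise identities in Section 2 (they are purely algebraic in $D^2v$).
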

%
%
%
%

\begin{proof}[Proof of Lemma \ref{lem3-1}]
{\bf Proof of \eqref{eq4.2} via { \eqref{neq3.1}}.} Assuming that  {\eqref{neq3.1}} holds, we prove \eqref{eq4.2} as below.  
For any  $ \phi\in C^\fz_c(  U )$, applying {\color{red}\eqref{neq3.1}}  we have
\begin{align*}
\int_B&|D[(|Dv|^2+\ez)^{\beta/2}Dv]|^2\phi^2\,dx\\
&\le C_\star(t_\pm,n,\beta) \int_B\sigma_2(D[(|Dv|^2+\ez)^{{\bz/2}}Dv])\phi^2\,dx+\wz C_\star(t_\pm,n,\beta) \int_B(|Dv|^2+\varepsilon)^{\bz}(g-v)^2\phi^2\,dx.
\end{align*}
Using Lemma \ref{lem2.3}, one has 
\begin{align*}
\int_B&|D[(|Dv|^2+\ez)^{\beta/2}Dv]|^2\phi^2\,dx\\
&\le C_\star(t_\pm,n,\beta) C(n) \int_{B}|D[(|Dv|^2+\ez)^{{\bz/2}}Dv]||(|Dv|^2+\ez)^{\beta/2}Dv-\vec{c}||\phi D\phi|\,dx  \\
&\quad +\wz C_\star(t_\pm,n,\beta) \int_{B}(|Dv|^2+\varepsilon)^{\bz}(g-v)^2\phi^2\,dx .
\end{align*}
Applying Young's inequality, we obtain 
\begin{align*}
\int_B&|D[(|Dv |^2+\ez)^{\beta/2}Dv ]|^2\phi^2\,dx\\
&\le\frac12\int_{B}|D[(|Dv |^2+\ez)^{{\bz/2}}Dv ]|^2\phi^2dx\\
&\quad+C( t_\pm,\beta)[\int_{B}|(|Dv |^2+\ez)^{\beta/2}Dv -\vec{c}|^2|D\phi|^2\,dx+\int_{B}(|Dv|^2+\varepsilon)^{\bz}(g-v)^2\phi^2\,dx]. 
\end{align*}
which yields   \eqref{eq4.2} as desired.

{\bf Proof of  \eqref{neq3.1}.}  To prove \eqref{neq3.1}, 
observe that  
\begin{align*} |D[(|Dv|^2+\ez)^{\bz/2}Dv]|^2&=\left| (|Dv|^2+\ez)^{\bz/2}[D^2v- \bz\frac{ 
D^2 vDv\otimes Dv
}{  |Dv|^2+\ez  } ]\right|^2\\
&=(|Dv|^2+\ez)^{\bz}\left[|D^2v|^2-2\bz\frac{ 
|D^2 vDv|^2
}{  |Dv|^2+\ez  }+\bz^2\frac{ 
|D^2 vDv|^2 |Dv|^2
}{ ( |Dv|^2+\ez)^2  }\right]\\
&\le C(\bz) (|Dv|^2+\ez)^{\bz} |D^2v|^2.
\end{align*}
We only need to prove 
\begin{align}\label{eq3.1xx}
C(\bz)(|Dv|^2+\ez)^{\bz }|D^2v|^2\le C_\star (t_\pm,n,\beta) \sigma_2(D[(|Dv|^2+\ez)^{{\bz/2}}Dv])+ \wz C_\star (t_\pm,n,\beta)(|Dv|^2+\ez)^{\bz}(g-v)^2 \quad \mbox{in $B$.} 
\end{align}

To see \eqref{eq3.1xx}, fix any  $x\in U$. 
At such $x$,   Lemma \ref{lem2.2} gives 
\begin{align}\label{exx.1}
\sigma_2(D[(|Dv |^2+\varepsilon)^{\frac{\bz}{2}}Dv ])=&\frac{1}{2}(|Dv |^2+\varepsilon)^\bz[|D^2v |^2-(\Delta v )^2] \\
&+\bz(|Dv |^2+\varepsilon)^{\bz-1}[|D^2v  Dv |^2-\Delta v \Delta_\fz v].  \nonumber
\end{align}
  If $Dv(x)=0$, at such $x$, {\color{red}\eqref{eq3.1}} becomes
$$
-\Delta v=g-v,
$$ and 
\eqref{exx.1} reads as 
$$
\sigma_2(D[(|Dv |^2+\varepsilon)^{\frac{\bz}{2}}Dv ])= \frac{1}{2} \varepsilon^\bz
[|D^2v |^2-(\Delta v )^2] .
$$ 
It then follows that 
 \begin{align*}
 \ez^\bz |D^2v |^2=2\sigma_2(D[(|Dv|^2+\ez)^{{\bz/2}}Dv])+\ez^\bz(g-v)^2 
\end{align*}
which gives \eqref{eq3.1xx} obviously. 
 Below  we always assume that 
  $Dv(x)\neq0$.  We separate the proof for \eqref{eq3.1xx} into 5  steps.

   \vspace{0.3cm}
{\bf Step 1.} 
    For any $\eta>0$,  we multiply  both sides of \eqref{exx.1} by $$(n-1+2\eta) \frac{|Dv|^2}{(|Dv|^2+\ez)^{\bz+1}}$$ so to get 
\begin{align}\label{eq3.2-0}
{\bf K}:&=(n-1+2\eta)\frac{|Dv|^2}{|Dv|^2+\ez}\frac{\sigma_2(D[(|Dv|^2+\ez)^{{\bz/2}}Dv])}{(|Dv|^2+\ez)^\bz} \\
&\ge \eta \frac{|Dv|^2}{|Dv|^2+\ez}[|D^2v|^2-(\Delta v)^2]\nonumber\\
&\quad+\frac{n-1}2 \frac{|Dv|^2}{|Dv|^2+\ez}[|D^2v|^2-(\Delta v)^2]\nonumber\\
&\quad+(n-1+2\eta)\beta \frac{|Dv|^2}{|Dv|^2+\ez}\bigg[\frac{|D^2vDv|^2}{|Dv|^2+\ez}-\frac{\Delta v\Delta_\fz v}{|Dv|^2+\ez}\bigg].\nonumber
\end{align}

To bound the second term {\color{red}on the right-hand
side} of \eqref{eq3.2-0}, recall that \eqref{keyin2} gives 
  \begin{align*}
[|D^2v|^2-(\Delta v)^2]|Dv|^4&\ge \frac n{n-1}|{D^2vDv}|^2|Dv|^2-\frac{n-2}{n-1}(\Delta v)^2|Dv|^4 \\
&-\frac2{n-1}\Delta_\fz v \Delta v |Dv|^2+\frac{n-2}{n-1}(|D^2vDv|^2|Dv|^2- {(\Delta_\fz v)^2}).\nonumber
\end{align*}
Since $|Dv(x)|\ne0$, dividing  both sides   by $$\frac2{n-1} |Dv(x)|^2(|Dv(x)|^2+\ez), $$ we deduce that 
\begin{align} \label{eq3.2-1a}
\frac{n-1}2\frac{|Dv|^2}{|Dv|^2+\ez}[|D^2v|^2-(\Delta v)^2]\ge&\frac n2\frac{|D^2vDv|^2}{|Dv|^2+\ez}-\frac{n-2}2(\Delta v)^2\frac{|Dv|^2}{|Dv|^2+\ez} \\
&-\frac{\Delta_\fz v\Delta v}{|Dv|^2+\ez} +\frac{n-2}2\bigg[\frac{|D^2vDv|^2}{|Dv|^2+\ez}-\frac{(\Delta_\fz v)^2}{|Dv|^2(|Dv|^2+\ez)}\bigg]. \nonumber
\end{align}

Pluging \eqref{eq3.2-1a} in \eqref{eq3.2-0}, one has 
\begin{align}
{\bf K}&\ge \eta \frac{|Dv|^2}{|Dv|^2+\ez}[|D^2v|^2-(\Delta v)^2]\nonumber\\
&\quad+\frac n2\frac{|D^2vDv|^2}{|Dv|^2+\ez}-\frac{n-2}2(\Delta v)^2\frac{|Dv|^2}{|Dv|^2+\ez}\nonumber\\
&\quad-\frac{\Delta_\fz v\Delta v}{|Dv|^2+\ez}+\frac{n-2}2\bigg[\frac{|D^2vDv|^2}{|Dv|^2+\ez}-\frac{(\Delta_\fz v)^2}{|Dv|^2(|Dv|^2+\ez)}\bigg]\nonumber 
\\
&\quad+(n-1+2\eta)\beta\frac{|Dv|^2}{|Dv|^2+\ez}\bigg[\frac{|D^2vDv|^2}{|Dv|^2+\ez}-\frac{\Delta v\Delta_\fz v}{|Dv|^2+\ez}\bigg].\nonumber
\end{align} 
We reorganize the right hand side in above inequality {\color{red} to get} 
\begin{align}\label{eq3.2-1}
{\bf K}&\ge\eta \frac{|Dv|^2}{|Dv|^2+\ez}|D^2v|^2 \\
&\ \ +\bigg[(n-1+2\eta)\bz\frac{|Dv|^2}{|Dv|^2+\ez}+\frac n2+\frac {n-2}2\bigg] \frac{|D^2vDv|^2}{|Dv|^2+\ez}\nonumber\\
&\ \  -\bigg(\frac{n-2}2+\eta\bigg) \frac{|Dv|^2}{|Dv|^2+\ez} (\Delta v)^2\nonumber\\
& \ \ -\bigg[1+(n-1+2\eta){\beta} \frac{|Dv|^2}{|Dv|^2+\ez} \bigg]\frac{\Delta v\Delta_\fz v}{|Dv|^2+\ez} \nonumber\\
&\ \ - \frac{n-2}2 \frac{(\Delta_\fz v)^2}{|Dv|^2(|Dv|^2+\ez)}.\nonumber
\end{align}
 
   \vspace{0.3cm}
{\bf Step 2.} 
Since  
\begin{align}\label{eeq3.3-1}
\Delta v= -(g-v)-(p(x)-2) \frac{\Delta_\fz v  }{\left|D v\right|^{2}+\ez},
\end{align}
one has
$$\Delta_\fz v\Delta v= -(g-v)\Delta_\fz v-(p(x)-2) \frac{(\Delta_\fz v)^2  }{\left|D v\right|^{2}+\ez} $$
and 
\begin{align*}
(\Delta v)^2=  (g-v)^2 
+2(p(x)-2) (g-v) \frac{\Delta_\fz v  }{\left|D v\right|^{2}+\ez}+(p(x)-2)^2 \frac{(\Delta_\fz v)^2  }{(\left|D v\right|^{2}+\ez)^2}. 
\end{align*}
 Therefore, 
\begin{align}\label{eq3.2-1b}
{\bf K}
&\ge\eta \frac{|Dv|^2}{|Dv|^2+\ez}|D^2v|^2\\
&\ \ +\bigg[(n-1+2\eta)\bz\frac{|Dv|^2}{|Dv|^2+\ez}+\frac n2+\frac {n-2}2\bigg] \frac{|D^2vDv|^2}{|Dv|^2+\ez}\nonumber\\
&\ \ +\bigg\{-(\frac{n-2}2+\eta)(p(x)-2)^2\frac{|Dv|^2}{|Dv|^2+\ez}\nonumber\\
&\ \ \ \ \ \ \  \ \ \ \  \ \ \ \ +(p(x)-2)\bigg[1+(n-1+2\eta){\beta} \frac{|Dv|^2}{|Dv|^2+\ez} \bigg] -\frac{n-2}2\frac{|Dv|^2+\ez}{|Dv|^2}\bigg\} \frac{(\Delta_\fz v)^2}{(|Dv|^2+\ez)^2}\nonumber\\
&\ \ +{\bf K}_1, \nonumber
\end{align}
where 
\begin{align*}
{\bf K}_1=&-(\frac{n-2}2+\eta)\frac{|Dv|^2}{|Dv|^2+\ez}\bigg[(g-v)^2 
+2(p(x)-2) (g-v) \frac{\Delta_\fz v  }{\left|D v\right|^{2}+\ez}\bigg]\\
&+ \bigg[1+(n-1+2\eta){\beta} \frac{|Dv|^2}{|Dv|^2+\ez} \bigg]\frac{(g-v)\Delta_\fz v}{|Dv|^2+\ez}.\end{align*}

   \vspace{0.3cm}
{\bf Step 3.} 
To bound ${\bf K}_1$ from below, 
we further write 
\begin{align*}{\bf K}_1=&\bigg[-(n-2+2\eta)(p(x)-2)+(n-1+2\eta)\bz\bigg](g-v)\frac{\Delta_\fz v}{|Dv|^2+\ez} \frac{|Dv|^2}{|Dv|^2+\ez}\\
&+(g-v)\frac{\Delta_\fz v}{|Dv|^2+\ez} \\
&-(\frac{n-2}2+\eta) \frac{|Dv|^2}{|Dv|^2+\ez}(g-v)^2.
\end{align*}
 Recalling $\Delta_\fz v =\langle D^2v  Dv ,Dv \rangle$, by Cauchy-Schwarz's inequallity one gets 
$$|D^2v||Dv |^2\ge|\Delta_\fz v|.$$
Applying 
Young's inequality, one has 
\begin{align*} (g-v)\frac{\Delta_\fz v}{|Dv|^2+\ez}
&\ge -|g-v||D^2v|\frac{|Dv|^2}{|Dv|^2+\ez}\\
&\ge -
 \frac\eta 8\frac{|Dv|^2}{|Dv|^2+\ez}|D^2v|^2-
C(  \eta)\frac{|Dv|^2}{|Dv|^2+\ez}(g-v)^2 
\end{align*}
and,  by $t_-<p(x)<t_+$ in $B$,   also has
\begin{align*}
&\bigg[-(n-2+2\eta)(p(x)-2)+(n-1+2\eta)\bz\bigg](g-v)\frac{\Delta_\fz v}{|Dv|^2+\ez} \frac{|Dv|^2}{|Dv|^2+\ez}\\
&\ge - C(t_\pm,n,\beta)|g-v| |D^2u|\frac{|Dv|^4}{(|Dv|^2+\ez)^2}\\
&\ge -
 \frac\eta 8\frac{|Dv|^2}{|Dv|^2+\ez}|D^2v|^2-
C(t_\pm,n,\beta,  \eta)\frac{|Dv|^6}{(|Dv|^2+\ez)^3}(g-v)^2\\
&\ge -
 \frac\eta 8\frac{|Dv|^2}{|Dv|^2+\ez}|D^2v|^2-
C(t_\pm,n,\beta,  \eta) \frac{|Dv|^2}{|Dv|^2+\ez}(g-v)^2. 
\end{align*}
Therefore, 
\begin{align}\label{K1}
 {\bf K}_1 
&\ge -\frac{\eta}2 \frac{|Dv|^2}{|Dv|^2+\ez}|D^2v|^2-
C(t_\pm,n,\beta,\eta)  \frac{|Dv|^2}{|Dv|^2+\ez}(g-v)^2 
.
\end{align}

To bound the second term in the right hand side of \eqref{eq3.2-1b}, observe that
$$(n-1+2\eta)\bz\frac{|Dv|^2}{|Dv|^2+\ez}+\frac n2+\frac{n-2}2> [(n-1+2\eta)\bz+(n-1)]\frac{|Dv|^2}{|Dv|^2+\ez} .$$
If $\bz\ge0$,  for any $\eta>0$ we have $$(n-1+2\eta)\bz\frac{|Dv|^2}{|Dv|^2+\ez}+\frac n2+\frac{n-2}2>0$$
and
if $\bz_\star(n,t_+)<\bz<0$,  
for $0<\eta<\frac12(1+\bz)$ we have 
$$(n-1+2\eta)\bz+(n-1)=(n-1)(1+\bz)+2\eta\bz>(n-1)(1+\bz)-2\eta >(n-1)(1+\bz)-(1+\bz)\ge0, $$
and hence 
 $$(n-1+2\eta)\bz\frac{|Dv|^2}{|Dv|^2+\ez}+\frac n2+\frac{n-2}2>0.$$
Since the Cauchy-Schwarz inequality  yields
$$\frac{|D^2vDv|^2}{|Dv|^2+\ez}\ge\frac{(\Delta_\fz v)^2}{(|Dv|^2+\ez)^2}\frac{|Dv|^2+\ez}{|Dv|^2},$$
we have 
$$
\bigg[(n-1+2\eta)\bz\frac{|Dv|^2}{|Dv|^2+\ez}+\frac n2+\frac {n-2}2\bigg] \frac{|D^2vDv|^2}{|Dv|^2+\ez}\ge \bigg[(n-1+2\eta)\bz\frac{|Dv|^2}{|Dv|^2+\ez}+\frac n2+\frac {n-2}2\bigg] \frac{(\Delta_\fz v)^2}{(|Dv|^2+\ez)^2}\frac{|Dv|^2+\ez}{|Dv|^2}.$$
From this, \eqref{K1} and \eqref{eq3.2-1b} we deduce that 
\begin{align}\label{eeq3.6}
{\bf K}
&\ge\frac\eta2 \frac{|Dv|^2}{|Dv|^2+\ez}|D^2v|^2 +{\bf K}_2 \frac{(\Delta_\fz v)^2}{(|Dv|^2+\ez)^2} -C(t_\pm,n,\beta,\eta)\frac{|Dv|^2}{|Dv|^2+\ez}(g-v)^2,
\end{align}
where
\begin{align*}
{\bf K}_2=&\bigg[(n-1+2\eta)\bz\frac{|Dv|^2}{|Dv|^2+\ez}+\frac n2+\frac {n-2}2\bigg]\frac{|Dv|^2+\ez}{|Dv|^2}\\
&   -(\frac{n-2}2+\eta)(p(x)-2)^2\frac{|Dv|^2}{|Dv|^2+\ez}\\
& +(p(x)-2)\bigg[1+(n-1+2\eta){\beta} \frac{|Dv|^2}{|Dv|^2+\ez} \bigg] \\
& -\frac{n-2}2\frac{|Dv|^2+\ez}{|Dv|^2}.  
\end{align*}
 
{\bf Step 4.} 
Now we choose suitable $\eta>0$ so that  $
{\bf K}_2\ge0.
$
Set $$\tau=\frac{|Dv|^2+\ez}{|Dv|^2}.$$ 
Then ${\bf K}_2$ reads as 
\begin{align*}
{\bf K}_2=&\bigg[(n-1+2\eta)\bz\frac{1}{\tau}+\frac n2+\frac {n-2}2\bigg]\tau\\
&  -(\frac{n-2}2+\eta)(p(x)-2)^2\frac{1}{\tau}\\
&{+}(p(x)-2)\bigg[1+(n-1+2\eta)\beta \frac{1}{\tau} \bigg]\\
& -\frac{n-2}2\tau. 
\end{align*}
Multiplying both sides by $\tau$, we have 
$$\tau {\bf K}_2= \big[(n-1+2\eta)\bz\tau+\frac n2 \tau^2\big]-(\frac{n-2}2+\eta)(p(x)-2)^2+   (p(x)-2)\bigg[\tau+(n-1+2\eta)\beta   \bigg].$$
Observe  that $$\frac n2\tau^2=(n-1)\tau^2+\frac{2-n}2\tau^2$$ and   $$(p(x)-2)\tau=(p(x)-2)(n-1)\tau +(p(x)-2)(2-n)\tau.$$  
We have
\begin{align*}
\tau {\bf K}_2= &(n-1+2\eta)\bz\tau+(n-1+2\eta)(p(x)-2)\bz\\
&+(n-1)\tau^2+(p(x)-2)(n-1)\tau\\
&+\frac{2-n}2\tau^2+(p(x)-2)(2-n)\tau-\frac{n-2}2(p(x)-2)^2\\
&-\eta(p(x)-2)^2. 
\end{align*}
Observe that
$$(n-1+2\eta)\bz\tau+(n-1+2\eta)(p(x)-2)\bz=(n-1+2\eta)\bz(p(x)-2+\tau),$$
$$(n-1)\tau^2+(p(x)-2)(n-1)\tau=(n-1)\tau[p(x)-2+\tau],$$
and 
\begin{align*}
&\frac{2-n}2\tau^2+(p(x)-2)(2-n)\tau-\frac{n-2}2(p(x)-2)^2\\
&=-\frac{n-2}2\big[\tau^2+2(p(x)-2)\tau+(p(x)-2)^2\big]\\
&=-\frac{n-2}2(p(x)-2+\tau)^2.
\end{align*}
One writes 
\begin{align*}\tau {\bf K}_2=&(p(x)-2+\tau)\big[(n-1+2\eta)\bz+(n-1)\tau-\frac{n-2}{2}(p(x)-2+\tau) \big]-\eta(p(x)-2)^2\\
=& (n-1)(p(x)-2+\tau)\big[\frac{n-1+2\eta}{n-1}\bz+\tau-\frac{n-2}{2}\frac{p(x)-2+\tau}{n-1}\big]-\eta(p(x)-2)^2 .\end{align*}
Note that $t_-\le   p(x)  \le t_+$ in $B$ implies 
$$(p(x)-2)^2\le \max\{(t_--2)^2,(t_+-2)^2\} .$$
Since $\tau>1$ and $p(x) \ge t_->1$ in {$B$}, it follows that
$$ (p(x)-2+\tau)>t_--1>0.$$
Moreover,  by $\tau>1$ again one has 
\begin{align*}
&\frac{n-1+2\eta}{n-1}\bz+\tau-\frac{n-2}{2}\frac{p(x)-2+\tau}{n-1}\\
&\quad=\frac{n-1+2\eta}{n-1}\bz+\frac n{2(n-1)}\tau-\frac{n-2}2\frac{p(x)-2}{n-1}\\
&\quad>\frac{n-1+2\eta}{n-1}\bz+\frac n{2(n-1)}-\frac{n-2}2\frac{p(x)-2}{n-1}\\
&\quad=\bz+1-\frac{n-2}{2}\frac{p(x)-1}{n-1}+\frac{ 2\eta}{n-1}\bz.
\end{align*}
Since    $p(x) \le t_+$, 
it follows that 
\begin{align*}
 \frac{n-1+2\eta}{n-1}\bz+\tau-\frac{n-2}{2}\frac{p(x)-2+\tau}{n-1} 
>& \bz+1-\frac{n-2}{2}\frac{t_+-1}{n-1}+\frac{ 2\eta}{n-1} \bz= \bz-\bz_\star(n,t_+)  +\frac{ 2\eta}{n-1} \bz, 
\end{align*}
where we recall that 
$$\bz_\star(n,t_+)=-1+\frac{(n-2)}{2(n-1)}(t_+-1).$$
Thank to $\beta>\bz_\star(n,t_+)$, if  $\bz\ge0$, for any $ \eta>0$, one has 
$$\bz-\bz_\star(n,t_+)  +\frac{ 2\eta}{n-1} \bz >0, $$
and hence 
 \begin{align*}
\tau {\bf K}_2\ge (n-1)(t_--1)\big[\beta-\bz_\star(n,t_+)\big]-\eta \max\{(t_--2)^2,(t_+-2)^2\}. 
\end{align*} 
If $ \bz<0$,    we  choose    
$$\wz \eta_\star(t_\pm,n,\bz)=\frac12\min\left\{1+\bz ,\frac{n-1}{-2\bz}[\bz-\bz_\star(n,t_+)]\right\}.$$ Then for  any $0<\eta<\wz \eta_\star(t_\pm,n,\bz)$ we have 
 $$\bz-\bz_\star(n,t_+)  +\frac{ 2\eta}{n-1} \bz 
 \ge \frac12[\bz-\bz_\star(n,t_+) ]>0,$$
 and hence 
%
%
%
%
%
%
%
%
%
 \begin{align*}
\tau {\bf K}_2\ge\frac12 (n-1)(t_--1)\big[\beta-\bz_\star(n,t_+)\big]-\eta \max\{(t_--2)^2,(t_+-2)^2\}.
\end{align*} 

Finally, we choose $\eta_\star(t_\pm, n, \bz)\in(0,1/2)$ when $\bz\ge0$, and    $\eta_\star(t_\pm, n, \bz)\in(0,\wz \eta_\star(t_\pm,n,\bz))$ when
$
 \bz<0$,  such that
$$\eta_\star(t_\pm, n, \bz)\max\{(t_--2)^2,(t_+-2)^2\}<\frac12 (n-1)(t_--1)\big[\beta-\bz_\star(n,t_+)\big].$$ 
Consequently, for any $\eta\in(0,\eta_\star (t_\pm, n, \bz))$, we have $\tau {\bf K}_2\ge0$
and hence  $ {\bf K}_2\ge0$ as desired. 

   \vspace{0.3cm}
{\bf Step 5.} 
 Taking   $\eta=\frac12\eta_\star(t_\pm, n, \bz)$ in \eqref{eeq3.6}, due to ${\bf K}_2\ge0$ and \eqref{eeq3.6}   we get
\begin{align*}
{\bf K}
&\ge\frac{\eta_\star(t_\pm, n, \bz)}4\frac{|Dv|^2}{|Dv|^2+\ez}|D^2v|^2 -C(t_\pm,n,\beta) \frac{|Dv|^2}{|Dv|^2+\ez}(g-v)^2.
\end{align*}
Multiplying   both sides by $$\frac1{n-1+{\eta_\star(t_\pm, n, \bz)}}(|Dv|^2+\ez)^\beta\frac{|Dv|^2+\ez}{|Dv|^2},$$ 
considering the definition of ${\bf K}$ given in \eqref{eq3.2-0} one has
\begin{align*}
&\frac{4 [n-1+{\eta_\star(t_\pm, n, \bz)}]}{\eta_\star(t_\pm, n, \bz)}\sigma_2(D[(|Dv|^2+\ez)^{{\bz/2}}Dv])\\
&\quad= \frac4{\eta_\star(t_\pm, n, \bz)}   (|Dv|^2+\ez)^\beta\frac{|Dv|^2+\ez}{|Dv|^2}{\bf K}\\
&\quad\ge (|Dv|^2+\ez)^{\bz}|D^2v|^2-\frac4{\eta_\star(t_\pm, n, \bz)} C(t_\pm,n,\beta )(|Dv|^2+\ez)^{\bz}(g-v)^2.\nonumber
\end{align*}
Obviously \eqref{eq3.1xx} follows with the $C_\star(t_\pm,n,\beta )$ determind  completely by $t_\pm$, $n$ and $\bz$. 
 The proof is complete. 
\end{proof}

\section{Proofs of main  theorems}

%
%
%
%

 Firstly, we prove the following with the aid of key Lemma 3.1 and  an approximation procedure used in \cite{SJ2022}. 

\begin{thm} 
Let $\Omega\subset\rn$ be a   domain,  $p(\cdot)\in C^{0,1}(\Omega;(1,\fz))$ and $f\in C^0(\Omega)$.    
Let  $u$ be  any  viscosity solution  to the equation \eqref{eq1f}. 

Given any $ t_\pm\in(1,\fz)$     and any subdomain $U\Subset\Omega$ with $t_-\le    p(x)\le t_+$ in $U$, if 
 $\bz\in(\bz_\star(n,t_+),\fz) \cap[0,\fz)$,
 then one has 
  $|Du|^{ \bz}Du\in W_\loc^{1,2 }(U )$  with  a  {\color{red}quantitative} upper bound
 \begin{align}\label{bound4-1}
\left\| D[ |Du | ^{\beta }Du ]\right\| _{L^{2 }(\frac 12B )}
 \le&  2^6
 C_\sharp (t_\pm,n,\bz)\left[\frac1{R } \left\| |Du  | ^{ \beta  }Du-\vec c\right\|_{L^2(\frac 34B )}+ \Big\| |Dv |^\beta f\Big\|_{L^{2 }(\frac 34 B )}\right]\quad \forall B=B(z,R)\Subset U 
\end{align}
   and also with a pointwise upper bound
\begin{align}\label{eq3.1b}
 |D[|Du|^\bz Du]|^2\le C_\star (t_\pm,n,\beta) \sigma_2(D[ |Du|^ {\bz} Du])+   \wz C_\star (t_\pm,n,\beta)|Du|^ {2\bz}f^2 \quad \mbox{a.\,e. in $U$}. 
\end{align}
Here $C_\star(t_\pm,n,\beta)  $,  $\wz C_\star(t_\pm;n,\beta)  $ and $C_\sharp(t_\pm,n,\beta)  $  are same as  Lemma 3.1.

\end{thm}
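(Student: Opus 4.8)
The plan is to combine the pointwise $\sigma_2$-estimate of Lemma \ref{lem3-1} with the vanishing-viscosity approximation of \cite{SJ2022}, and then pass to the limit exploiting the divergence structure of $\sigma_2$ (Lemmas \ref{lem2.3} and \eqref{id2.4-1}). Fix a ball $B=B(z,R)\Subset U$. For small $\ez>0$ take standard mollifications $p^\ez,f^\ez,u^{0,\ez}$ of $p,f,u$, so that $p^\ez\in C^\fz(B)$ with $t_-\le p^\ez\le t_+$ on $B$ once $\ez$ is small (because $t_-\le p\le t_+$ on $U$ and $B\Subset U$), and $f^\ez\to f$, $u^{0,\ez}\to u$ locally uniformly in $B$; let $v^\ez$ solve the Dirichlet problem \eqref{neq4-1in}. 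Since the operator is uniformly elliptic with smooth coefficients and proper (monotone in the zeroth-order term), $v^\ez\in C^\fz(B)\cap C(\overline B)$. The crucial input, quoted from \cite{SJ2022} --- and the place where $p(\cdot)\in C^{0,1}$ is used --- is that $v^\ez\to u$ and $Dv^\ez\to Du$ locally uniformly in $B$, with $\sup_\ez\|Dv^\ez\|_{L^\fz(B')}<\fz$ for every $B'\Subset B$ (a uniform $C^{1,\az}_\loc$ estimate); in particular $Du\in L^\fz_\loc(U)$, and $f^\ez+u^{0,\ez}-v^\ez\to f$ locally uniformly.

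Next, apply Lemma \ref{lem3-1} to $v=v^\ez$ with $g=f^\ez+u^{0,\ez}$ and the same $t_\pm,\bz$: since $\bz\in(\bz_\star(n,t_+),\fz)$, both \eqref{neq3.1} and \eqref{eq4.2} hold for $v^\ez$ with the $\ez$-independent constants $C_\star,\wz C_\star,C_\sharp$. Choose a cutoff $\phi\in C_c^\fz(\tfrac34 B)$ with $\phi\equiv1$ on $\tfrac12 B$ and $|D\phi|\le 8/R$ and insert it into \eqref{eq4.2}; because $\{Dv^\ez\}$ and $\{f^\ez+u^{0,\ez}-v^\ez\}$ are bounded in $L^\fz_\loc(B)$, the right-hand side is bounded uniformly in $\ez$, so $\{(|Dv^\ez|^2+\ez)^{\bz/2}Dv^\ez\}_\ez$ is bounded in $W^{1,2}(\tfrac12 B)$. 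Now, because $\bz\ge0$, the vector fields $\xi\mapsto(|\xi|^2+\ez)^{\bz/2}\xi$ converge to $\xi\mapsto|\xi|^\bz\xi$ uniformly on compact subsets of $\rn$, and together with $Dv^\ez\to Du$ locally uniformly this yields $(|Dv^\ez|^2+\ez)^{\bz/2}Dv^\ez\to|Du|^\bz Du$ locally uniformly in $B$. Hence the weak $W^{1,2}(\tfrac12 B)$ limit of this family is $|Du|^\bz Du$, so $|Du|^\bz Du\in W^{1,2}(\tfrac12 B)$, and taking $\liminf_{\ez\to0}$ in \eqref{eq4.2} (both terms on the right converge by the locally uniform convergences above, using $f^\ez+u^{0,\ez}-v^\ez\to f$), then optimizing over $\vec c$, gives \eqref{bound4-1}; since $B$ was arbitrary, $|Du|^\bz Du\in W^{1,2}_\loc(U)$.

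For the pointwise bound \eqref{eq3.1b}, fix $0\le\psi\in C_c^\fz(B)$, multiply the pointwise inequality \eqref{neq3.1} for $v^\ez$ by $\psi$, integrate over $B$, and rewrite $\int_B\sigma_2(D[(|Dv^\ez|^2+\ez)^{\bz/2}Dv^\ez])\psi\,dx$ in divergence form via Lemma \ref{lem2.3}, so that it becomes a finite sum of integrals of a first-order factor in $(|Dv^\ez|^2+\ez)^{\bz/2}Dv^\ez$ times a component of $D[(|Dv^\ez|^2+\ez)^{\bz/2}Dv^\ez]$ times $D\psi$. Letting $\ez\to0$: the left-hand side obeys $\liminf_\ez\int_B|D[(|Dv^\ez|^2+\ez)^{\bz/2}Dv^\ez]|^2\psi\,dx\ge\int_B|D[|Du|^\bz Du]|^2\psi\,dx$ by weak lower semicontinuity; in the $\sigma_2$-term the first-order factors converge strongly in $L^2_\loc$ while the second-order factors converge weakly in $L^2_\loc$ to the derivatives of $|Du|^\bz Du$, so the strong$\times$weak products paired against the smooth $D\psi$ pass to the limit, which by \eqref{id2.4-1} equals $\int_B\sigma_2(D[|Du|^\bz Du])\psi\,dx$; the last term converges to $\wz C_\star\int_B|Du|^{2\bz}f^2\psi\,dx$ by locally uniform convergence. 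Thus $\int_B|D[|Du|^\bz Du]|^2\psi\,dx\le C_\star\int_B\sigma_2(D[|Du|^\bz Du])\psi\,dx+\wz C_\star\int_B|Du|^{2\bz}f^2\psi\,dx$ for every such $\psi$, which gives \eqref{eq3.1b} a.e.\ in $B$, hence a.e.\ in $U$ by covering.

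The main obstacle is the approximation step: establishing $v^\ez\to u$ and, above all, $Dv^\ez\to Du$ locally uniformly together with $\ez$-independent $L^\fz_\loc$ (indeed $C^{1,\az}_\loc$) bounds --- this is exactly where $p(\cdot)\in C^{0,1}$ is essential, and it is supplied by the stability and regularity theory of \cite{SJ2022}. The only delicate point internal to the present argument is the passage to the limit in the $\sigma_2$-term of \eqref{neq3.1}, where $\sigma_2$ is quadratic in the second derivatives and only weak $L^2_\loc$ convergence of those is available; this is precisely why $\sigma_2$ must be put into the divergence form of Lemma \ref{lem2.3} and \eqref{id2.4-1} before taking $\ez\to0$.
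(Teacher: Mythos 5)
Your proposal reproduces the paper's proof essentially step for step: the same mollified Dirichlet problem \eqref{neq4-1in}, the same appeal to Siltakoski's compactness, stability, and comparison argument to get $v^\ez\to u$ and $Dv^\ez\to Du$ locally uniformly with $\ez$-independent $C^{1,\alpha}$ bounds, the same application of Lemma \ref{lem3-1} with a cutoff between $\tfrac12 B$ and $\tfrac34 B$ to get uniform $W^{1,2}$ bounds for $(|Dv^\ez|^2+\ez)^{\bz/2}Dv^\ez$, and the same passage to the limit in the quadratic $\sigma_2$-term via the divergence identities \eqref{id2.4} and \eqref{id2.4-1}. The only difference is cosmetic --- your uniform-convergence-of-the-maps $\xi\mapsto(|\xi|^2+\ez)^{\bz/2}\xi$ argument versus the paper's terser pointwise identification of the weak limit --- and both invoke $\bz\ge0$ at the same juncture (identifying the limit and passing to the limit in the $(|Dv^\ez|^2+\ez)^\bz(f^\ez+u^{0,\ez}-v^\ez)^2$ term).
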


\begin{proof}
Fix any ball $B=B(z,R)\Subset U$, and write $\ez_B:=\frac12\min\{ R,\dist(B,\partial U)\}$. 
Write  $\{f^\ez\}_{\ez\in(0,\ez_B)}$, $\{p^\ez\}_{\ez\in(0,\ez_B)}$ and $ \{u^{0,\ez}\}_{\ez\in(0,\ez_B)}$ as standard smooth modifications  
    of $f,p$ and $u$ respectively, that is, 
   $$ \mbox{$f^\ez=f\ast\eta_\ez,\, p^\ez=p\ast\eta_\ez$ and $u^{0,\ez}=u\ast\eta_\ez\quad\forall \ez\in(0,\ez_B)$},$$ 
   where     $ \eta\in C^\fz_c(B(0,1))$ is the standard smooth mollifier and $\eta_\ez(x)=\ez^{-n} \eta(z/\ez) $ for $\ez\in(0,\ez_B)$. 
   Observe that   
    $$\mbox{$f^\ez \to f, p^\ez\rightarrow p,u^{0,\ez}\rightarrow u$ uniformly in $\overline B$ as $\ez\rightarrow0$.}$$
For  $\ez\in(0,\ez_B)$,  one has 
$$t_-\le (p^\ez)^{ B}_-\le (p^\ez)^{ B}_+\le t_+,$$
and  also 
 $$\mbox{$||f^\ez||_{L^\fz(  B)}\le||f||_{L^\fz((1+\ez_B)B)}$,\  $||Dp^\ez||_{L^\fz( B)}\le||Dp||_{L^\fz((1+\ez_B)B)}$ \  \mbox{and}\
 $||u^{0,\ez}||_{L^\fz( B)}\le||u||_{L^\fz((1+\ez_B)B)}$.}$$

For $\ez\in(0,\ez_B)$, we consider the following approximation problem
\begin{align}\label{neq4-1}
\left\{
\begin{array} {ll}\displaystyle 
- \Delta w-(p^\ez(\cdot)-2)\frac{\Delta_\fz w}{|Dw|^2+\ez} =f^\ez+u^{0,\ez}-w\quad& \displaystyle\mbox{in $B$}\\[3mm]
  \displaystyle w=u\quad& \displaystyle \mbox{on $\partial B$}.
\end{array}\right.
\end{align}
Define  $$L_\ez w:=\Delta w+(p^\ez(\cdot)-2)\frac{\Delta_\fz w}{|Dw|^2+\ez}=\sum_{i,j=1}^n\left[\delta_{ij}+\frac{   w_{x_i}w_{x_j} }{|Dw|^2+\ez} \right]w_{x_i,x_j}.$$
Note that 
  $$\min(1,p^B_--1)|\xi|^2\le \sum_{i,j=1}^n
 \left[\delta_{ij}+(p^\ez(\cdot)-2)\frac{   \xi_i\xi_j }{|\xi|^2+\ez} \right]\xi_i\xi_j\le    (p^B_++1)|\xi|^2 \quad\forall \xi=(\xi_1,...,\xi_n)\in\rr^n\quad \mbox{in $B$}.$$ 
 Thus, $L_\ez$ is a  second order partial differnetial operator satisfying the   uniformly elliptic condition. 
 By the well-known standard theory for second order elliptic {\color{red}equations},
 for each $\ez\in (0,\ez_B)$, 
 the problem \eqref{neq4-1} has a solution   $ v^\ez\in C^\fz(B)\cap C(\overline B)$; see
 for example \cite[Theorem 15.18]{DT2001} and \cite[Theorem 6.17]{DT2001}.
Thanks to the  maximum principle (see {\cite[Theorem 10.3]{DT2001}}), we {\color{red}further} know that $\{v^\ez\}_{\ez\in(0,\ez_B)}$ are  bounded  uniformly  in $\ez\in(0,\ez_B)$; indeed, their upper bounds   depend only on $n,\, t_\pm,\, ||u||_{L^\fz((1+\ez_B)B)}$ and $||f||_{L^\fz((1+\ez_B)B)}$. 
By \cite[Proposition 4.14]{CC1995},   $\{v^\ez\}_{\ez\in(0,\ez_B)}$ are equicontinuous in $\overline B$; note that their modulus of continuity   depend only on $n,\, t_\pm,\, ||u||_{L^\fz((1+\ez_B)B)}$ and $||f||_{L^\fz((1+\ez_B)B)}$ and modulus of continuity of $u$ in $ (1+\ez_B) B$. Therefore, thanks to   the Ascoli-Arzela theorem, up to some subsequence  we have $v^\ez\rightarrow v\in C^0(\overline B)$ uniformly in $\overline B$.
By the stability principle of viscositly solutions (see \cite[Appendix]{SJ2022}), $v$ is a viscosity solution to 
\begin{equation}\label{vvv}\mbox{$-\Delta^N_{p(\cdot)}v=f+(u-v)$\quad in $B$;\quad   $v =u$ on $\partial B$}.\end{equation}  
Since the comparison principle holds for  \eqref{vvv}
  (see \cite{SJ2022}),  and both of $u$ and $v$ solve \eqref{vvv} in viscosity sense, 
  we have  $v=u$.
Moreover, by \cite{SJ2022} we know that
\begin{equation}\label{xx}{
\mbox{ $Dv^\ez\rightarrow Du$ in {\color{red}$C^{\alpha}(\frac 34B)$} and $
||Dv^\ez||_{L^\fz(\frac 34B)}\le C(n,t_{\pm} , \|Dp\|_{L^\fz((1+\ez_B)B)},||f||_{L^\fz((1+\ez_B)B)}, ||u||_{L^\fz((1+\ez_B)B)}){\color{red}.} $} }
\end{equation}

Since $t_-\le 
p^\ez (x)\le t_+$ in $  B$,   we are able to apply  Lemma 3.1 to $v^\ez$. 
In \eqref{eq4.2}, choosing $ \phi\in C^\fz_c( \frac34 B )$ so that
 $$  0\le \phi\le 1 \ \mbox{in}\   \frac34   B,\  \phi=1\  \mbox{in} \ \frac12 B\ \mbox{ and }  |D\phi|\le \frac 8R \ \mbox{ in} \ \frac34 B  ,$$ 
we  get 
\begin{align} \label{vepsupper}
&\int_{ \frac12B}|D[(|Dv^\ez|^2+\ez)^{ {\bz}/2}Dv^\ez]|^2 \,dx \\
&\le  2^6C_\sharp(t_\pm, n, \bz) \left [\frac1{R^2}\int_{ \frac34B }|(|Dv^\ez|^2+\ez)^{\beta/2}Dv^\ez-\vec{c}|^2\,dx +\int_{ \frac34 B }(|Dv^\ez|^2+\ez)^{\bz}(f^\ez+u^{0,\ez}-v^\ez)^2\,dx\right] \nonumber
\end{align}
 for any $\vec c\in\rr$. 
 
Thanks to \eqref{xx} and the fact that $u^{0,\ez}\to u$ and $ v^\ez\to u$ in
$ C^0(\overline B)$ as $\ez\to0$, we know that 
the right hand of \eqref{vepsupper} is   uniformly bounded in all  $\ez\in(0,\ez_B)$.
Thus  $$\mbox{$ (|Dv^\ez|^2+\ez)^{\frac\bz 2}Dv^\ez\in W^{1,2}(\frac12 B)$ uniformly in  all  $\ez\in(0,\ez_B)$. }$$
From the weak compactness of Sobolev spaces $W^{1,2}(\frac12B)$,  it follows that, 
up to some subsequence,  $ (|Dv^\ez|^2+\ez)^{\frac\bz 2}Dv^\ez$ converges to  
some funtion $\vec h$  in {\color{red}$L^2({\frac12}B)$} and weakly in $W^{1,2}( \frac12B)$. 
 Since $Dv^\ez\rightarrow Du$ in {\color{red}${\frac34}B$}   {\color{red}as given in \eqref{xx}},  we deduce that $\vec h= |Du|^\bz Du\in W^{1,2}(\frac12B)$.
{\color{red}From this and \eqref{xx} we conclude that }
$$\mbox{$(|Dv^\ez|^2+\ez)^{\frac\bz 2}Dv^\ez\rightarrow |Du|^\beta Du$  in $ L^2( \frac34 B)$ and weakly in $W^{1,2}(\frac 12B)$ as $\varepsilon\rightarrow 0$.  }$$
 Letting $\ez\to0$ in \eqref{vepsupper}, one has 
 \begin{align*}
\int_{\frac 12B }|D[  |Du| ^{\beta }Du ]|^2\,dx
\le &2^6C_\sharp(t_\pm,n,\bz)\left[\frac1{R^2} \int_{\frac 34B }   ||Du |^{\beta } Du -\vec c |^2 \,dx+ \int_{\frac 34B }|Du|^{2\bz}f^2\,dx\right] 
\end{align*}
for any $\vec c\in\rn$. 
 Thus  \eqref{bound4-1} follows. Taking over all balls $B\Subset U$, one has  $ |Du| ^{\beta }Du\in W^{1,2}_\loc (U)$.

Finally,  we prove  \eqref{eq3.1b}. 
For any $\phi\in C^\fz_c(\frac12 B)$, multiplying both sides of  \eqref{neq3.1} by $\phi^2$ and integrating,   one    has 
\begin{align}\label{eq4.xx1}
 \int_{{\frac12  }B}|D[(|Dv^\ez|^2+\ez)^{\frac{\bz}2}Dv^\ez]|^2\phi^2 \,dx&\le  C_\star (t_\pm,n,\beta)\int_{{\frac12  }B}
\sigma_2(D[(|Dv^\ez|^2+\ez)^{{\bz/2}}Dv])\, \phi^2\,dx\\
 &\quad + \wz C_\star (t_\pm,n,\beta)\int_{{\frac12  }B}(|Dv^\ez|^2+\ez)^{\bz}(f+u^{0,\ez}- v^\ez)^2\phi^2\,dx  \nonumber
\end{align}
Since 
$D[(|Dv^\ez|^2+\ez)^{\frac{\bz}2}Dv^\ez]\to D[  |Du| ^{\beta }Du ]$ weakly in $L^2({\frac12} B)$ one has 
$$
\int_{{\frac12  }B}|D[  |Du| ^{\beta }Du ]|^2\psi\,dx\le\lim_{\ez\to0}\int_{{\frac12  }B}|D[(|Dv^\ez|^2+\ez)^{\frac{\bz}2}Dv^\ez]|^2\psi \,dx.$$
Noting that  $(|Dv^\ez|^2+\ez)^{\frac{\bz}2}Dv^\ez \to |Du| ^{\beta }Du$
in $L^2({\frac34} B)$ and weakly in $W^{1,2}({\frac12} B)$,
by \eqref{id2.4} and \eqref{id2.4-1} one has 
\begin{align*} 
&\int_{{\frac12  }B}\sigma_2(D[(|Dv^\ez|^2+\ez)^{{\bz/2}}Dv^\ez])\phi\,dx\\
&\quad =\int_{{\frac12  }B}\sum\limits_{1\leq i<j\leq n} {\color{red}[}(|Dv^\ez|^2+\ez)^{\bz/2 }v^\ez_{x_i}-c_i][(|Dv^\ez|^2+\ez)^{\bz/2 }v^\ez_{x_j}]_{x_j}\phi_{x_i}\,dx\nonumber\\
&\quad\quad -\int_{{\frac12  }B}\sum\limits_{1\leq i<j\leq n} {\color{red}[}(|Dv^\ez|^2+\ez)^{\bz/2 }v^\ez_{x_i}-c_i][(|Dv^\ez|^2+\ez)^{\bz/2 }v^\ez_{x_j}]_{x_i}\phi_{x_j}\,dx\\
&\quad\to \int_{{\frac12  }B}\sum\limits_{1\leq i<j\leq n} {\color{red}[} |Du| ^{\bz }u_{x_i}-c_i]
[ |Du| ^{\bz  }u_{x_j}]_{x_j}\phi_{x_i}\,dx\nonumber\\
&\quad\quad -\int_{{\frac12  }B}\sum\limits_{1\leq i<j\leq n} {\color{red}[}  |Du| ^{\bz }u_{x_i}-c_i]
[  |Du| ^{\bz }u_{x_j}]_{x_i}\phi_{x_j}\,dx\\
&\quad= \int_{{\frac12  }B}
\sigma_2(D[  |Du| ^{\bz }Du])\, \phi\,dx. 
\end{align*}
Moreover,   noting $u^{0,\ez}- v^\ez\to u-u= 0$ in $C^0({\frac12} \overline B)$, by $\bz\ge 0$ we have 
\begin{align} \label{conf}
\int_{{\frac12  }B}(f^\ez+u^{0,\ez}- v^\ez)^2(|Dv^\ez|^2+\ez)^{\bz}\phi\,dx \to  
\int_{{\frac12  }B} f^2 |Du| ^{\bz}\phi\,dx.\end{align}

Combinging these, letting $\ez\to0$ in \eqref{eq4.xx1}, one has   
\begin{align}\label{eq4.x1-1}
 \int_{{\frac12  }B} |D[|Dv|^\bz Du]|^2\phi^2\,dx 
 & \le C_\star (t_\pm,n,\beta)\int_{{\frac12  }B}\sigma_2(D[ |Du| ^{{\bz }}Du])\phi^2\,dx\\
 &\quad\quad+\wz C_\star (t_\pm,n,\beta) \int_{{\frac12  }B}(|Du|^{2\bz}f ^2\phi^2\,dx \quad\forall \phi\in C^\fz_c({\frac12  }B). \nonumber
\end{align}
This implies that 
 $$|D[|Dv|^\bz Du]|^2 
   \le C_\star (t_\pm,n,\beta) \sigma_2(D[ |Du| ^{{\bz }}Du]) +\wz C_\star (t_\pm,n,\beta)  (|Du|^{2\bz}f ^2\ \mbox{a.\,e. in}\ \frac14 B .$$
  Taking over all balls $B\Subset U$, we obtain \eqref{eq3.1b} as desired.  
\end{proof}

Next we use the Gehring lemma to improve the integrability of  $D[ |Du | ^{\beta }Du]$. 

\begin{lem}\label{lem4-2} 
Let $\Omega\subset\rn$ be a   domain,  $p(\cdot)\in C^{0,1}(\Omega;(1,\fz))$ and $f\in C^0(\Omega)$.    
Let  $u$ be  any  viscosity solution  to the equation \eqref{eq1f}.

Given any $ t_\pm\in(1,\fz)$  and  any subdomain $U\Subset\Omega$ with $t_-\le p(x)\le t_+$ in $U$, if $\bz\in(\bz_\star(n,t_+),\fz) \cap[0,\fz)$,
 then   
    $     |Du | ^{\beta }Du\in W^{1,2+\dz}_\loc(U)  $ 
     for some constant $ \dz=\dz(t_\pm n, \bz )>0 $ with 
     \begin{align}\label{xxx1}
&\left(\dashint_{\frac 14B}|D(|Du|^\beta Du)|^{2+\dz}\,dx\right)^{\frac 1{ 2+\dz }}\nonumber\\
&\le C(t_\pm,n,  \beta ) \left[\frac1{R } \left(\dashint_{ {\frac34} B }   ||Du |^{\beta } Du -\vec c |^2 \,dx\right)^{1/2} +  \left(\dashint_{ {\frac34} B}(|Du|^{\bz}|f|)^{2+\dz} \,dx\right)^{\frac1{2+\dz}}\right] \quad\forall B=B(z,R)\Subset U.  
\end{align}
     
%
 Note that  $\dz(t_\pm, n, \bz)>0$ and  $C(t_\pm, n, \bz)\ge1$ are constants depending only on $t_\pm, n$ and $\bz$. 
 \end{lem}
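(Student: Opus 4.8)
The plan is to obtain \eqref{xxx1} as a direct consequence of Theorem 4.1 together with Gehring's lemma (Lemma \ref{lem4}). First I would fix a ball $B=B(z,R)\Subset U$ and record the reverse-type inequality produced by Theorem 4.1. Applying \eqref{bound4-1} not on $B$ itself but on an arbitrary subball $B'=B(z',\rho)$ with $2B'\Subset U$, and dividing by $|B'|^{1/2}$, one gets
\begin{align*}
\left(\dashint_{\frac12 B'}|D(|Du|^\beta Du)|^{2}\,dx\right)^{1/2}
\le C(t_\pm,n,\beta)\left[\frac1{\rho}\left(\dashint_{\frac34 B'}||Du|^\beta Du-\vec c|^2\,dx\right)^{1/2}
+\left(\dashint_{\frac34 B'}(|Du|^{\beta}|f|)^2\,dx\right)^{1/2}\right].
\end{align*}
Choosing $\vec c$ to be the average of $|Du|^\beta Du$ over $\frac34 B'$ and invoking the Sobolev--Poincaré inequality (with exponent $q=\frac{2n}{n+2}$ on the right, and noting $|Du|^\beta Du\in W^{1,2}_{\loc}(U)$ from Theorem 4.1), the first term is bounded by $\left(\dashint_{\frac34 B'}|D(|Du|^\beta Du)|^{q}\,dx\right)^{1/q}$ up to a dimensional constant. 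After a standard covering/enlarging argument to replace the mismatched balls $\frac12 B'$, $\frac34 B'$ by a clean pair $B'\Subset 2B'$, this yields a reverse Hölder inequality of exactly the form required by Lemma \ref{lem4}, with $F=|D(|Du|^\beta Du)|^{q}$, $r=2/q>1$, $G=(|Du|^\beta|f|)^{q}$, and some exponent $s>r$ (any $s$ with $rq<2^\ast$ works, where $2^\ast$ is the Sobolev conjugate; here $G\in L^s$ locally because $Du\in L^\infty_{\loc}(U)$ by \eqref{xx} and $f\in C^0$).

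Second, I would apply Lemma \ref{lem4} to get $\delta'\in(0,s-r)$ and a constant $C$, both depending only on $t_\pm,n,\beta$, such that
\begin{align*}
\left(\dashint_{B'}F^{r+\delta'}\,dx\right)^{1/(r+\delta')}\le C\left(\dashint_{2B'}F^{r}\,dx\right)^{1/r}+C\left(\dashint_{2B'}G^{r+\delta'}\,dx\right)^{1/(r+\delta')}
\end{align*}
for all $B'\Subset 2B'\Subset U$. Unwinding the substitutions ($F^{r}=|D(|Du|^\beta Du)|^{2}$, $G^{r}=(|Du|^\beta|f|)^{2}$, and setting $\delta=q\delta'$) turns this into higher integrability $|Du|^\beta Du\in W^{1,2+\delta}_{\loc}(U)$ together with a quantitative bound. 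Finally, to land on the precise form \eqref{xxx1} with the clean pair $\frac14 B$, $\frac34 B$ and the factor $\frac1R$, I would feed back the original $W^{1,2}$ estimate \eqref{bound4-1} (on $\frac34 B$, using again Sobolev--Poincaré to convert the gradient $L^2$ norm on the right into the $L^2$ norm of $|Du|^\beta Du-\vec c$, or rather keeping it as is since \eqref{bound4-1} already has that form) into the $L^2$ term on the right-hand side of the Gehring estimate, via a routine chaining over a finite family of balls covering $\frac14 B$ inside $\frac34 B$, tracking the $R$-dependence through the scaling of the Poincaré constant.

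The main obstacle I anticipate is purely bookkeeping rather than conceptual: matching the radii and the explicit $\frac1R$ weight in \eqref{xxx1}. Theorem 4.1 gives the self-improving inequality on balls $\frac12 B'$ versus $\frac34 B'$, while Gehring's lemma as stated in Lemma \ref{lem4} wants $B'$ versus $2B'$; bridging this requires a Vitali-type covering argument and care that all constants stay dependent only on $t_\pm,n,\beta$ (in particular that the number of balls in the cover is dimensional). A secondary point to be careful about is that applying Sobolev--Poincaré to pass from $\|D(|Du|^\beta Du)\|_{L^2}$ to $\||Du|^\beta Du-\vec c\|_{L^{2^\ast}}\supseteq L^2$ is fine for $n\ge 3$, while for $n=2$ one should use the embedding $W^{1,q}\hookrightarrow L^2$ with $q<2$ directly; either way the exponent $r=2/q$ in Lemma \ref{lem4} exceeds $1$, which is all that is needed. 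None of these steps is deep, so I would not grind through them in detail.
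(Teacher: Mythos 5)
Your proposal follows essentially the same route as the paper: apply the $W^{1,2}$ estimate \eqref{bound4-1} from Theorem 4.1 on sub-balls, use the Sobolev--Poincaré inequality with exponent $q=\frac{2n}{n+2}$ to turn it into a reverse Hölder inequality for $F=|D(|Du|^\beta Du)|^{q}$ with $r=2/q=\frac{n+2}{n}$ and $G=(|Du|^\beta|f|)^{q}$, invoke Gehring's Lemma~\ref{lem4} (justifying $G\in L^s_{\loc}$ via $Du\in L^\infty_{\loc}$ and $f\in C^0$), and then feed \eqref{bound4-1} back in to obtain the stated form \eqref{xxx1}. This is precisely the paper's argument, and your side remark about $n=2$ is harmless since $q=1$ and $W^{1,1}\hookrightarrow L^2$ in two dimensions, so the same Sobolev--Poincaré step applies uniformly.
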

\begin{proof} Note that $Du\in L^\fz(U)$.  
By Theorem 4.1, 
   $     |Du | ^{\beta }Du\in W^{1,2}_\loc (U)  $.
For any ball $B=B(z,R)\Subset U$,   \eqref{bound4-1} implies   
    \begin{align}
    \dashint_{{\frac 14}B }|D[  |Du| ^{\beta }Du ]|^2\,dx
\le & 2^6C_\sharp(t_\pm, n ,\bz)\left[\frac1{R^2} \dashint_{ {\frac12} B }   ||Du |^{\beta } Du -\vec c_{{\frac12} B} |^2 \,dx
+ \dashint_{ {\frac12} B}|Du|^{2\bz}f^2\,dx\right],
\end{align}
where  $\vec c_{{\frac12} B}=\dashint_{ {\frac12} B}|Du |^{\beta } Du\,dx $.
   Applying the Sobolev-Poincar\' e 
inequality,  we have
$$
\frac1{R^2} \dashint_{ {\frac12} B }   ||Du |^{\beta } Du -\vec c |^2 \,dx\le C(n)
\Big(\dashint_{ {\frac12} B }|D(|Du|^\beta Du)|^{\frac{2n}{n+2}}\,dx\Big )^\frac{n+2}{n}.$$
Thus 
\begin{align*}
\dashint_{{\frac14} B }|D(|Du|^{\beta}Du)|^2\,dx\le &C_\sharp ( t_{\pm},n,\beta) C(n)\left[
\Big(\dashint_{ {\frac12} B }|D(|Du|^\beta Du)|^{\frac{2n}{n+2}}\,dx\Big )^\frac{n+2}{n}+ \dashint_{ {\frac12} B }|Du|^{2\bz}f^2\,dx\right]. 
\end{align*}
Note that   $|Du|^{ \bz}|f|\in L^\fz(U)$ and hence $|Du|^{ \bz}|f|\in L^s(U)$ for all $s>1$. 
Applying Lemma \ref{lem4} with $r= \frac {n+2}n$, $F=|D(|Du|^{\beta}Du)|^{\frac{2n}{n+2}}$ and $G= [ |Du|^{ \bz}|f|]^{\frac{2n}{n+2}}$, there exists   $\dz=\dz(t_\pm,n, \bz)>0$ such that 
\begin{align*}
\left(\dashint_{{\frac14} B}|D(|Du|^\beta Du)|^{2+\dz}\,dx\right)^{\frac 1{ 2+\dz }}
&\le  C( t_{\pm},n,\beta)\left[
\left(\dashint_{ {\frac12} B }|D(|Du|^\beta Du)|^2\,dx\right )^{\frac12}+ \left(\dashint_{ {\frac12} B}(|Du|^{\bz}|f|)^{2+\dz} \,dx\right)^{\frac1{2+\dz}}\right]. 
\end{align*}
for any ball $B=B(z,R)\Subset U$.

Applying  \eqref{bound4} again   we  have 
\begin{align*}
&\left(\dashint_{\frac 14B}|D(|Du|^\beta Du)|^{2+\dz}\,dx\right)^{\frac 1{ 2+\dz }}\\
&\le  C(t_\pm,n,  \beta ) \left[\frac1{R } \left(\dashint_{ {\frac34} B }   ||Du |^{\beta } Du -\vec c |^2 \,dx\right)^{1/2}{+\left(\dashint_{\frac34B}(|Du|^\beta f)^2\,dx\right)^{1/2}}+  \left(\dashint_{ {\frac12} B}(|Du|^{\bz}|f|)^{2+\dz} \,dx\right)^{\frac1{2+\dz}}\right] 
\end{align*}
for any ball $B=B(z,R)\Subset U$, which together with 
 the Cauchy-Schwarz inequality yields   \eqref{xxx1} as desired.
\end{proof}

We are ready to prove Theorem 1.1-theorem 1.4. 

\begin{proof}[Proofs of Theorems 1.1 and 1.3.]
In dimension $n=2$,  one has $\bz_{\star}(2,t_+)=-1$ for all $t_+>1$.
Hence, 
 $\bz\ge0$ always implies that $\bz>\bz_{\star}(2,t_+)$.  Take $t_\pm= p^U_\pm$.
 Applying Theorem 4.1 and Lemma 4.1  with $f=0$, we get the desired Theorem 1.1. 
 Applying Theorem 4.1 and Lemma 4.1   with general $f\in C^0(\Omega)$, we get the desired Theorem 1.3.
\end{proof}

\begin{proof}[Proofs of Theorems 1.2 and 1.4.] Take $t_\pm= p^U_\pm$. 
  Applying Lemma 4.1 with $f=0$, we get the desired Theorem 1.2 (ii) directly. 
   Applying Lemma 4.1 with general $f\in C^0(\Omega)$, we get the desired Theorem 1.4 (ii) directly. 
  
  To see Theorem 1.2 (i), note that for any $U\Subset\Omega_n^\ast$, one has 
  $1<p^U_-\le p^U_+< 3+\frac2{n-2}$. Thus  
 $$ \bz_\star(n,p^U_+)< \bz_\star(n,3+\frac2{n-2})= -1+\frac{ n-2 }{n-1}(3+\frac2{n-2}-1)=0.$$
Applying Theorem 1.2 (ii) to any $U\Subset\Omega_n^\ast$, taking  $\bz=0$ we get   Theorem 1.2 (i). 
Applying Theorem 1.4 (ii) to any $U\Subset\Omega_n^\ast$, taking  $\bz=0$ we get   Theorem 1.4 (i). 
\end{proof}

We end this section by the following remark.

\begin{rem}\rm 
(i) 
In Theorem 4.1,   the adiditonal assumption $\bz\ge0$  is used to guarantee the convergnece \eqref{conf}. 
If $p^ U_+\ge 3+\frac2{n-2}$ we must have $t_+\ge 3+\frac2{n-2}$ and hence 
 $$\bz_\star(n,t_+) \ge\bz_\star(n,3+\frac2{n-2}) =0.$$
 So $\bz>\bz_\star(n,t_+)$ implies $ \bz\ge0$. 
If  $p^ U_-<3+\frac2{n-2}$, it may happen that  $p^U_+\le t_+<3+\frac2{n-2}$, which implies   
 $$\bz_\star(n,t_+)<\bz_\star(n,3+\frac2{n-2})=0.$$   
 When $\bz_\star(n,t_+)<\bz<0,$
  it is not clear whether the convergence \eqref{conf} holds or not. 
  Indeed, in the case $f=0$ and hence $ f^\ez=0$,   
  \eqref{conf} becomes 
\begin{equation}\label{convf0}\int_{\frac12 B }(|Dv^\ez|^2+\ez)^{\bz}( u^{0,\ez}-v^\ez)^2\,dx\to 0.
\end{equation}
Since the convergence rate of $u^{0,\ez}-v^\ez\to0$ is not clear,  we {\color{red}do not  know whether} \eqref{convf0} holds or not. 
In the case $f\ne 0$, say $ f=1$,  to get 
\eqref{conf}, we need apriori that  $|Du|^{2\bz}\in L^1_\loc$, which we do not know yet. 

(ii)  Recall that  the uniquess of viscosity solutions to {\color{red}equations} \eqref{eq1} and \eqref{eq1f} remains open. 
This is the reason that, in the proof of  Theorem 4.1, we use  approximation equations \eqref{neq4-1}  so that 
their smooth solutions $v^\ez$ approximate the given solution $u$ to the equation \eqref{eq1f} and, when $f=0$, to the {\color{red}equation} \eqref{eq1}.

In the case $f=0$, suppose that the uniquess of viscosity solutions to equations \eqref{eq1} holds, we {\color{red}may} remove the additional assumption $\bz\ge0$ in Theorem 4.1 and hence Lemma 4.1 via a different approximation argument as below. 
For any $\ez\in(0,\ez_B)$, instead of \eqref{neq4-1}, let $w^\ez$ be the smooth solution  to the equation 

\begin{align}\label{appro4-1}
\left\{
\begin{array} {ll}\displaystyle 
- \Delta w^\ez-(p^\ez(x)-2)\frac{\Delta_\fz w^\ez }{|Dw^\ez|^2+\ez} =0& \displaystyle\mbox{in $B$}\\[3mm]
  \displaystyle w^\ez=u\quad& \displaystyle \mbox{on $\partial B$}.
\end{array}\right.
\end{align}
 The uniquess for viscosity solutions to {\color{red}equations} \eqref{eq1} will guarantee that $w^\ez\to u$ in $ C^0(B)$ as $ \ez\to0$. 
 For any $\bz>\bz_\star(n,t_+)$, which maybe less than $0$, 
 instead of  \eqref{vepsupper}, we will get 
\begin{align} \label{vepsupper-1}
&\int_{ \frac12B}|D[(|Dv^\ez|^2+\ez)^{\frac{\bz}2}Dv^\ez]|^2 \,dx \le  2^6C_\sharp(t_\pm, n, \bz)  \frac1{R^2}\int_{ \frac34B }|(|Dv^\ez|^2+\ez)^{\beta/2}Dv^\ez-\vec{c}|^2\,dx 
\end{align}
 for any $\vec c\in\rr$.  Sending $\ez\to0$  and repeating above argument,
  we will get  \eqref{bound4-1} and \eqref{eq3.1b} with $f=0$ therein.  Repeating the proof of
  Lemma 4.1 we also have \eqref{xxx1} with $f=0$. 
 
 (iii) The assumption $p(\cdot
 )\in C^{0,1}(\Omega)$ in Theorem 4.1 and hence in Lemma 4.1  is  used to get the uniform 
 $C^{1,\alpha}$-regualrity of $\{v^\ez\}$ so to get the convergence of $Dv^\ez\to Du$ locally uniformly.  
 Moreover, $p(\cdot)\in C^{0,1}(\Omega)$ also implies that $Du\in L^\fz_\loc(\Omega)$. 
 
 Under  merely $p(\cdot)\in C^{0}(\Omega)$, which is the   most natural assumption to define viscosity solutions, 
it is not clear whether  the above argument can be adaped so to get the same conclusions as Theorem 4.1.

\end{rem}

 \par \indent  Yuqing Wang 
 \par \indent  Department of Mathematics  
  \par \indent  Beihang University
   \par \indent  Changping District Gaojiaoyuan Road South 3rd Street No.9
   \par \indent  Beijing 102206
   
    \par \indent  P.R. China 
 
  \par \indent  Email: yuqingwang@buaa.edu.cn,
 
  \par \indent 
  
 Yuan Zhou

  \par \indent School of  Mathematical Sciences 
  
 \par \indent  Beijing Normal University 
 
  \par \indent Haidian District Xueyuan Road Xinjiekou Waidajie No. 19 
  
  \par \indent Beijing 100875 
  
  \par \indent  P. R. China. 
 
  \par \indent 
 Email:  yuan.zhou@bnu.edu.cn

\end{document}